\numberwithin{equation}{section}
\newcommand{\F}{\mathcal{F}}
\newcommand{\PWidth}{\tx{PWidth}}
\newcommand{\RR}{\mathbb{R}}
\newcommand{\DSB}{\tx{DSB}}
\newcommand{\Sc}[2]{\langle #1, #2 \rangle}
\newcommand{\bak}{\bar{\alpha}_k}
\newcommand{\tx}{\textnormal}
\newcommand{\BB}{\bar{B}}
\newcommand{\n}[1]{\|#1\|}
\newcommand{\argmin}{\tx{argmin}}
\newcommand{\argmax}{\tx{argmax}}
\newcommand{\xx}{\bar{x}}
\newcommand{\dist}{\tx{dist}}
\newcommand{\Ab}{\mathcal{A}}
\newcommand{\s}[2]{\langle #1, #2 \rangle}
\newcommand{\OM}{\Omega}
\newcommand{\K}{K}
\newcommand{\conv}{\tx{conv}}
\newcommand{\sm}{\setminus}
\newcommand{\SB}{\tx{SB}}
\newcommand{\bqfs}{q_{gs}}
\newcommand{\PFWidth}{\textnormal{PFWidth}}
\newcommand{\ck}{\sqrt{2 \mu}}
\newcommand{\pfaces}{\text{pfaces}}
\newcommand{\vf}{\text{vf}}
\newcommand{\qfs}{\bar{q}_{gs}}
\newtheorem{theorem}{Theorem}[section]
\newtheorem{assumption}{Assumption}[section]
\newtheorem{lemma}{Lemma}[section]
\newtheorem{corollary}{Corollary}[section]
\newtheorem{proposition}{Proposition}[section]
\newtheorem{example}{Example}%
\newtheorem{remark}{Remark}%
	\title{Avoiding bad steps in Frank-Wolfe variants}
\author{ Francesco~Rinaldi\thanks{Dipartimento di Matematica ``Tullio Levi-Civita'', Universit\`a
		di Padova, Italy
		(\tt{rinaldi@math.unipd.it})}
	\and 
	Damiano~Zeffiro\thanks{Dipartimento di Matematica ``Tullio Levi-Civita'', Universit\`a
		di Padova, Italy
		(\tt{damiano.zeffiro@math.unipd.it})}
}
\begin{document}
\maketitle

	\abstract{	The study of Frank-Wolfe (FW) variants is often complicated by the presence of different kinds of "good" and "bad" steps. In this article, we aim to simplify the convergence analysis of specific variants by getting rid of such a distinction between steps, and to improve existing rates by ensuring a non-trivial bound at each iteration.

		In order to do this, we define the Short Step Chain (SSC) procedure, which skips gradient computations in consecutive short steps until proper conditions are satisfied. This algorithmic tool allows us to give a unified analysis and converge rates in the general smooth non convex setting, as well as a linear convergence rate under a Kurdyka-\L ojasiewicz (KL) property. While the KL setting has been widely studied for proximal gradient type methods, to our knowledge, it has never been analyzed before for the Frank-Wolfe variants considered in the paper.

		An angle condition, ensuring that the directions selected by the methods have the steepest slope possible up to a constant, is used to carry out our analysis.
		We prove that such a  condition is satisfied, when considering minimization problems over a polytope, by the away step Frank-Wolfe (AFW), the pairwise Frank-Wolfe (PFW), and the Frank-Wolfe method with in face directions (FDFW). \\}

\textbf{Keywords:} Nonconvex optimization,
		First-order optimization, Frank-Wolfe variants, Kurdyka-\L ojasiewicz property. \\
     	\textbf{MSC Classification:}{46N10, 65K05, 90C06, 90C25, 90C30}
	\maketitle
		\section{Introduction}
	The Frank-Wolfe method \cite{frank1956algorithm} and its variants (see, e.g., \cite{freund2017extended}, \cite{lacoste2015global} and references therein) provide a valid alternative to projected gradient approaches for the constrained optimization of a smooth objective $f: \RR^n \rightarrow \RR$, in settings where projecting on the feasible set may be unpractical. These methods have found many applications in sparse and structured optimization (see, e.g., \cite{berrada2018deep}, \cite{freund2017extended}, \cite{jaggi2013revisiting}, \cite{joulin2014efficient}, 
\cite{osokin2016minding}  and references therein).

In this paper, we aim to overcome an annoying issue affecting the analysis of some FW variants, that is the presence of "bad iterations", i.e., iterations where we cannot show good progress. This happens when we are forced to take a short step along the search direction to guarantee feasibility of the iterate.  The number of short steps typically needs to be upper bounded in the convergence analysis with "ad hoc" arguments (see, e.g., \cite{freund2017extended} and \cite{lacoste2015global}). The main idea behind our method is to chain several short steps by skipping gradient updates until proper conditions are met. 

\subsection{Related work}
\textbf{FW variants.} The main drawback of the classic FW algorithm is its slow $O(1/k)$ convergence rate for convex objectives. This rate is tight even for strongly convex objectives on polytopes, due to a well understood zig-zagging behaviour near optima on the boundary (see, e.g., \cite{canon1968tight} and \cite{wolfe1970convergence}). The study of assumptions and variants leading to faster rates is a rapidly developing field. \\ 
Alternative or modified directions moving away from "bad" vertices or atoms have a long history, starting at least with the work of Wolfe \cite{wolfe1970convergence} (see \cite{kolmogorov2020practical}  and \cite{lacoste2015global} for recent references). In addition to considering new directions, the works \cite{braun2019blended} and \cite{braun2017lazifying} propose strategies to skip the linear minimization oracle (LMO) computation from time to time by caching linear minimizers, while the recent work \cite{kolmogorov2020practical} for optimization on polytopes applies recursively a FW variant to smaller polytopes. However, to our knowledge, no strategy to avoid short steps has been discussed in these previous works. \\ 
For smooth strongly convex objectives, the convergence rates of many of these "improved directions" FW variants is linear on polytopes (see, e.g., \cite{beck2017linearly} and \cite{lacoste2015global}). Furthermore, in \cite{kerdreux2019restarting} it was proved that  convergence rate of an AFW variant is adaptive to H\"{o}lderian error bound conditions interpolating between the general convex case and the strongly convex one. \\
A different approach, adopted in the general smooth convex setting, is to use FW variants to approximate projections. In particular, the conditional gradient sliding method uses the FW method to approximate projections on the feasible set within a projected gradient scheme (see, e.g., \cite{hazan2016variance} and \cite{lan2016conditional}). Another approach introduced in \cite{combettes2020boosting} for smooth convex objectives implicitly uses the Non Negative Matching Pursuit (NNMP) algorithm to compute an approximate projection of the negative gradient on the tangent cone. To our knowledge, however, conditional gradient sliding approaches always lead to a sublinear $O(1/\varepsilon)$ LMO complexity, and the approach in \cite{combettes2020boosting} does not lead to any improvement on the $O(1/\varepsilon)$ worst case gradient complexity of the classic FW. \\
 Outside the projection free setting, in \cite{mortagy2020walking} a procedure making multiple steps without updating the gradient (in a fashion similar to our SSC) is defined, and it is claimed that the approach traces the piecewise linear projection curve on polytopes, thus leading to the same linear convergence rate of the standard projected gradient method in the strongly convex setting. \\
In the non convex setting, for the classic FW algorithm a convergence rate of $O(1/\sqrt{k})$ was proved in \cite{lacoste2016convergence} and then extended to other variants in \cite{bomze2020active} and \cite{qu2018non}. \\

\textbf{KL property.} The KL property (see, e.g., \cite{attouch2010proximal}, \cite{bolte2007clarke} and \cite{bolte2010characterizations}) has been extensively applied to compute the convergence rates of proximal subgradient type methods (see, e.g., \cite{attouch2010proximal}, \cite{attouch2013convergence},  \cite{bolte2017error}, \cite{wang2019global} and \cite{xu2013block}). Furthermore, for convex objectives, it has been proved that H\"{o}lderian error bound conditions are a particular case of this property \cite{bolte2017error}. However, we are not aware of previous applications to the Frank-Wolfe variants under study in this paper. \\

\textbf{Angle condition.} The analysis of unconstrained descent methods often relies on some version of an angle condition, imposing an upper bound on the angle between the negative gradient and the descent direction selected by the method (see, e.g.,  \cite{absil2005convergence},  \cite{grippo1986nonmonotone} and \cite{zhang2006descent}). However, due to the presence of short steps and full FW steps, these analyses do not extend to our setting in a straightforward way.\\ In Section 3, we present an angle condition for optimization over a convex set. While to our knowledge this extension is novel for first order optimization methods, analogous conditions can be found in the context of direct search methods for linearly constrained derivative free optimization (see, e.g., \cite{kolda2007stationarity} and \cite{lewis2007implementing}), imposed on the smallest angle between the negative gradient and a search direction. Finally, we remark that our condition was somehow used, but not stated explicitly, in \cite{beck2017linearly} and \cite{lacoste2015global} within the context of smooth strongly convex optimization over polytopes.

\subsection{Contributions}

Our main contributions are twofold:
\begin{itemize}
	\item We formulate an angle condition for projection free methods, and prove that it leads to linear convergence in the number of "good steps" for non convex objectives  satisfying a KL inequality. We show that this condition applies to the away step Frank-Wolfe (AFW), the pairwise Frank-Wolfe (PFW)  and the FW method with in face directions (FDFW) (see, e.g., \cite{freund2017extended}, \cite{lacoste2015global}, \cite{garber2016linear} and \cite{guelat1986some}) on polytopes. First, we give linear rates for good steps in Proposition \ref{p:taurate}. Then, we give global asymptotical rates under the assumption that the number of bad steps between two good steps is bounded in Proposition \ref{p:simpleasymptotic}. We apply this result to FW variants in Corollary \ref{lr:goodsteps}.
	\item We define the SSC procedure, which  can be applied to all the FW variants listed in the first point, and show that it gets improvements on known rates (see Table \ref{tab:2} in Section \ref{Section:SSC}). In particular, we prove that it leads to global linear convergence rates with no bad steps (see Lemma \ref{p:crkl} and Corollary \ref{cor:klrate}) under a global KL inequality and the angle condition. We then prove that we have local linear convergence rates and asymptotical linear convergence rates under a local KL property as well (see Theorem~\ref{t:loc} and Corollary \ref{cor:glob}). This, to our knowledge, is the first (bad step free) linear convergence rate  for FW variants under the KL inequality. In the general smooth non convex case, we further prove, under the angle condition, a $O(1/\sqrt{k})$ convergence rate with respect to a  specific measure of non-stationarity for the iterates, that is the projection of the negative gradient on the convex cone of feasible directions (see Theorem \ref{cor:stationary}, Corollary \ref{cor:gennc} and Remark \ref{r:ncrates}). 
\end{itemize}  
While here we apply our framework only to the AFW, the PFW, and the FDFW on polytopes, we remark that our results hold for projection free methods on generic convex sets. In an extended version of this paper \cite{rinaldi2020unifying} we show applications  on convex sets with smooth boundary for FW variants and methods using orthographic retractions (see also \cite{absil2012projection}, \cite{balashov2019gradient}, \cite{levy2019projection} and references therein). \\
The reasons why eliminating bad steps truly makes a difference in our context are the following: 
\begin{itemize}
	\item it rules out impractical convergence rates due to a large number of bad steps.
	An interesting example is given by
	the rate guarantee reported in \cite{lacoste2015global} for the pairwise Frank-Wolfe (PFW) variant on the $N - 1$ dimensional simplex. This guarantee is indeed more loose than for the other variants, because there is no satisfactory bound on the number of such problematic steps
	(there is a best known bound of $3N!$ bad steps for each good step);
	\item it eliminates the dependence of the convergence rates on the support of the starting point (see, e.g.,  \cite{johnell2020frank} and \cite{kolmogorov2020practical}). This dependence can significantly affect the performance of FW variants on smooth non convex optimization problems \cite{cristofari2020active}.
\end{itemize}

Finally, while beyond the scope of this paper, we mention that bad steps lead to a slow active set identification for the AFW, when compared to the "one shot" identification property characterizing proximal gradient methods and active set strategies (see \cite{cristofari2020active}, \cite{nutini2019active} and references therein). More precisely, analyses in recent works (\cite{bomze2019first}, \cite{bomze2020active} and \cite{garber2020revisiting}) show that a number of bad steps equal to the number of "wrong" atoms is performed by the method in a sufficiently small neighborhood of a solution to identify its support.
\subsection{Paper structure}

The structure of the paper is as follows. In Section 2, we define some notation and state some preliminary results from convex analysis. In Section 3, we introduce the angle condition for first-order projection free methods, show examples of FW variants
satisfying the condition and prove linear convergence in the number of good steps. We define the SSC procedure in Section 4, where we also state the main convergence results. Preliminary numerical results are reported in Section \ref{REST}, while the missing proofs can be found in the appendix. 

\section{Notation and preliminaries}\label{s:prel} 
We consider the following constrained optimization problem:
\begin{equation}\label{eq:mainpb}
	\min \left\{f(x) \ \vert \ x \in \Omega \right\} \, .
\end{equation}
In the rest of the article $\Omega$ is a compact and convex set and $f \in C^1(\OM)$ with $L$-Lipschitz gradient:
$$ \n{\nabla f(x) - \nabla f(y)} \leq L\n{x - y} \tx{ for all }x, y \in \OM \, .$$
We define $D$ as the diameter of $\OM$, $\hat{c} = c/\|c\|$ for $c \in \mathbb{R}^n / \{ 0 \} $ and $\hat{c} = 0$ for $c = 0$. For sequences we write $\{x_k\}$ instead of $\{x_k\}_{k \in I}$  when $I$ is clear from the context, with $[j : i] = \{j, j+1, ..., i-1, i\}$. 	
For $a, b \in \RR \cup \{\pm \infty\}$ we denote as $[a < f(x) < b]$ the set $\{ x \in \OM \ \vert \ f(x) \in (a, b) \}$, with analogous definitions for non strict inequalities. 
For subsets $C, D$ of $\RR^n$ we define $\dist(C, D)$ as
\begin{equation*}
	\dist(C, D) = \inf \{\n{y-z} \ \vert \ z \in C, \ y \in D \} \, ,
\end{equation*}
$B_R(C)$ as the neighborhood $\{x \in \RR^{n} \ \vert \ \dist(C, x) < R \}$  of $C$ of radius $R$ and in particular $B_R(x)$ as the open euclidean ball of radius $R$ and center $x$. When $C$ is closed and convex we define as $\pi(C, \cdot)$ the projection on $C$. If $C$ is a cone then we denote with $C^*$ its polar. 

We now state some elementary properties related to the tangent and the normal cones, where for $\xx\in \Omega$ we denote with $T_{\Omega}(\bar{x})$ and $N_{\OM}(\bar{x})$ the tangent and the normal cone to $\Omega$ in $\xx$ respectively. The next proposition (from \cite{rockafellar2009variational}, Theorem 6.9) characterizes these cones for closed convex subsets of $\RR^n$. 
\begin{proposition} \label{normalC}
	Let $\Omega$ be a closed convex set. For every point $\bar{x}\in \Omega$ we have
	\begin{align*}
		&T_{\Omega}(\bar{x}) = \textnormal{cl}\{w \ \vert \ \exists \lambda>0 \textnormal{ with } \bar{x}+\lambda w \in \Omega \} \, , \\
		\textnormal{int}&(T_{\Omega}(\bar{x})) = \{w \ \vert \ \exists \lambda > 0 \textnormal{ with } \bar{x}+\lambda w \in \textnormal{ int}(\Omega)\} \, , \\ 
		& N_{\Omega}(\bar{x}) = T_{\Omega}(\bar{x})^*=\{v\in \RR^n\ \vert \ (v,y - \xx)\leq 0 \ \forall\ y \in  \OM \} \, .
	\end{align*}	
\end{proposition}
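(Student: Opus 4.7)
The plan is to work with the cone of feasible directions $D(\bar{x}) := \{w \mid \exists \lambda > 0, \ \bar{x} + \lambda w \in \Omega\}$ and reduce the three assertions to elementary convexity arguments, together with the defining description of the tangent cone as the closed cone of limits of sequences $(x_k - \bar{x})/t_k$ with $x_k \in \Omega$ and $t_k \downarrow 0$.

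For the first identity, I would first show $D(\bar{x}) \subseteq T_\Omega(\bar{x})$: given $w \in D(\bar{x})$ with witness $\bar{x} + \lambda w \in \Omega$, convexity of $\Omega$ yields $\bar{x} + (\lambda/k) w = (1 - 1/k)\bar{x} + (1/k)(\bar{x} + \lambda w) \in \Omega$ for every integer $k \ge 1$, so the sequence with step sizes $t_k = \lambda/k \downarrow 0$ realizes $w$ as a tangent vector. Since $T_\Omega(\bar{x})$ is closed by construction, this gives $\textnormal{cl}(D(\bar{x})) \subseteq T_\Omega(\bar{x})$. For the converse, any tangent vector arises as a limit of vectors of the form $(x_k - \bar{x})/t_k$ with $x_k \in \Omega$ and $t_k \downarrow 0$, each of which lies in $D(\bar{x})$ by construction. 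Given the first identity, the third assertion is essentially immediate: by bilinearity of the inner product, $v \in T_\Omega(\bar{x})^*$ iff $\langle v, (y - \bar{x})/\lambda \rangle \leq 0$ for every $y \in \Omega$ and $\lambda > 0$, equivalently $\langle v, y - \bar{x}\rangle \leq 0$ for all $y \in \Omega$, since scaling by $\lambda > 0$ and taking the closure preserve the polarity relation.

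For the second identity, the straightforward direction uses convexity once more: if $\bar{x} + \lambda w \in \textnormal{int}(\Omega)$ with $B_\varepsilon(\bar{x} + \lambda w) \subseteq \Omega$, then $\bar{x} + \lambda w' \in \Omega$ for every $w'$ within $\varepsilon/\lambda$ of $w$, so a full neighborhood of $w$ lies in $D(\bar{x}) \subseteq T_\Omega(\bar{x})$. Conversely, if $w \in \textnormal{int}(T_\Omega(\bar{x}))$ I would pick perturbations $e_0, \ldots, e_n$ affinely spanning $\mathbb{R}^n$ such that $w + \varepsilon e_i \in T_\Omega(\bar{x}) = \textnormal{cl}(D(\bar{x}))$ for all $i$, approximate each by a genuine feasible direction, and choose a single scaling $\lambda > 0$ small enough that all $n+1$ points $\bar{x} + \lambda (w + \varepsilon e_i)$ lie in $\Omega$; then $\bar{x} + \lambda w$ is expressible as a convex combination with strictly positive coefficients of the vertices of a full-dimensional simplex inside $\Omega$, hence lies in $\textnormal{int}(\Omega)$.

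The main obstacle is precisely the converse direction of the second identity: promoting the mere closure membership furnished by the first identity into genuine feasibility, while simultaneously choosing a common scaling $\lambda$ so that the constructed simplex is both full-dimensional and contained in $\Omega$. Once this geometric step is carried out, all three assertions follow from routine bookkeeping.
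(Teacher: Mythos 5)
Your argument is sound, but note that the paper does not actually prove this proposition: it is imported verbatim from Rockafellar--Wets, \emph{Variational Analysis}, Theorem 6.9, so there is no internal proof to compare against. What you have written is a self-contained elementary substitute, and it is essentially the standard textbook argument. The first identity is handled correctly via the convexity observation $\bar{x}+(\lambda/k)w=(1-1/k)\bar{x}+(1/k)(\bar{x}+\lambda w)\in\Omega$ in one direction and the Bouligand definition in the other; the third identity then follows because a linear functional is nonpositive on $\textnormal{cl}(D(\bar{x}))=\textnormal{cl}\bigl(\bigcup_{\lambda>0}(\Omega-\bar{x})/\lambda\bigr)$ iff it is nonpositive on $\Omega-\bar{x}$. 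The one definitional caveat there: you prove $T_{\Omega}(\bar{x})^*=\{v:\langle v,y-\bar{x}\rangle\le 0\ \forall y\in\Omega\}$ and treat $N_{\Omega}(\bar{x})=T_{\Omega}(\bar{x})^*$ as given; in Rockafellar--Wets the normal cone is defined independently (via regular normals) and the identification with the polar is part of the content of Theorem 6.9, though for this paper's purposes only the characterization you prove is ever used. For the second identity, the step you flag as the main obstacle is in fact routine and your sketch closes it: take $e_0,\dots,e_n$ the vertices of a regular simplex centered at the origin with $\varepsilon$ small enough that $w+\varepsilon e_i\in T_{\Omega}(\bar{x})$, approximate by $w_i\in D(\bar{x})$ closely enough that affine independence and the strict interiority of $w$ in $\textnormal{conv}\{w_0,\dots,w_n\}$ persist (both are open conditions), and use the same convexity trick as in part one to shrink all the witnesses $\lambda_i$ to a common $\lambda=\min_i\lambda_i$. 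So the proposal is correct and complete up to that standard perturbation argument; it simply supplies a proof where the paper supplies a citation.
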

We have the following formula connecting the supremum of a linear function "slope" along feasible directions to the tangent and the normal cone: 
\begin{proposition} \label{NWessential}
	If $\Omega$ is a closed convex subset of $\RR^n$, $\bar{x} \in \Omega$ then for every $g \in \RR^n$
	$$	\max \left \{0, \sup_{h\in\Omega\sm \{\bar{x}\}} \left (g, \frac{h-\bar{x}}{\|h - \bar{x}\|}\right )\right \} = \textnormal{dist}(N_{\Omega}(\bar{x}), g) = \|\pi(T_{\Omega}(\bar{x}), g)\| \, .$$	
\end{proposition}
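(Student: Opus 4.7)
The plan is to prove the right-hand equality first, as a consequence of Moreau's decomposition, and then the left-hand equality by combining the variational characterization of the projection on a cone with the fact that the radial cone of feasible directions is dense in the tangent cone.

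First I would invoke Moreau's decomposition: since $N_\Omega(\bar{x}) = T_\Omega(\bar{x})^*$ by Proposition \ref{normalC}, and both are closed convex cones, any $g \in \R^n$ decomposes uniquely as $g = \pi(T_\Omega(\bar{x}), g) + \pi(N_\Omega(\bar{x}), g)$ with the two components orthogonal. This immediately gives
\[
\dist(N_\Omega(\bar{x}), g)^2 = \|g - \pi(N_\Omega(\bar{x}), g)\|^2 = \|\pi(T_\Omega(\bar{x}), g)\|^2,
\]
settling the second equality.

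For the first equality, write $K = T_\Omega(\bar{x})$ and $p = \pi(K, g)$. For any $h \in \Omega \setminus \{\bar{x}\}$, the vector $v = h - \bar{x}$ lies in $K$ by Proposition \ref{normalC}. Using $g = p + (g-p)$ with $g - p \in N_\Omega(\bar{x})$ and hence $\langle g-p, v\rangle \leq 0$, together with Cauchy--Schwarz, I get $\langle g, v\rangle \leq \langle p, v\rangle \leq \|p\|\,\|v\|$, so after dividing by $\|v\|$ the supremum is bounded above by $\|p\|$. Since $\|p\| \geq 0$, this also bounds the $\max$ with $0$.

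For the matching lower bound, I split two cases. If $p = 0$, then $g \in N_\Omega(\bar{x})$, so $\langle g, h - \bar{x}\rangle \leq 0$ for every $h \in \Omega$, the supremum is non-positive, and the $\max$ with $0$ equals $0 = \|p\|$. If $p \neq 0$, the direction $\hat p$ lies in $K$, which by Proposition \ref{normalC} is the closure of the radial cone $R = \{w : \exists \lambda > 0,\ \bar{x} + \lambda w \in \Omega\}$. Hence there exist $w_k \in R$ with $w_k \to \hat p$, which I can write as $w_k = \lambda_k^{-1}(h_k - \bar{x})$ with $h_k \in \Omega \setminus \{\bar{x}\}$; then $\widehat{h_k - \bar{x}} = \hat w_k \to \hat p$ and
\[
\left\langle g, \frac{h_k - \bar{x}}{\|h_k - \bar{x}\|}\right\rangle \to \langle g, \hat p\rangle = \|p\|,
\]
where I used $\langle g, p\rangle = \|p\|^2$ (a standard consequence of the projection inequality applied to $2p$ and $0$ in $K$). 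This shows the supremum is at least $\|p\|$, completing the proof.

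The main technical point is the density argument in the second case: passing from an abstract element of the closed tangent cone to an explicit sequence of feasible chord directions. Once this is set up via the radial-cone description in Proposition \ref{normalC}, the rest reduces to the Moreau decomposition and elementary inner-product manipulations.
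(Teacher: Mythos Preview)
Your proof is correct and follows essentially the same approach as the paper: both use the Moreau decomposition for the equality $\dist(N_\Omega(\bar x),g)=\|\pi(T_\Omega(\bar x),g)\|$, and both rely on the density of the radial cone in $T_\Omega(\bar x)$ to pass between chord directions $\widehat{h-\bar x}$ and tangent directions. The only organizational difference is that the paper first shows the closures of the two sets of unit directions coincide and then invokes the auxiliary lemma $\dist(C^*,y)=\sup_{c\in C}\langle \hat c,y\rangle$, whereas you bypass that lemma by directly approximating the single direction $\hat p$ and using $\langle g,p\rangle=\|p\|^2$; this is a minor repackaging, not a different argument.
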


This property is a consequence of the Moreau-Yosida decomposition \cite{rockafellar2009variational} and we refer the reader to the Appendix for a detailed proof. On polytopes, a geometric interpretation is that the smallest angle between $g$ and a descent direction $d$ feasible in $\bar{x}$ is achieved for $d = \pi(T_{\OM}(\xx), g)$. \\
In the rest of the article to simplify notations we often use $\pi_{\xx}(g)$ as a shorthand for $\|\pi(T_{\Omega}(\bar{x}), g)\|$. Then, by Proposition~\ref{NWessential}, first order stationarity conditions in $\xx$ for the gradient $-g$ become equivalent to $\pi_{\xx}(g) = 0$. \\
In the computation of the convergence rates, we often make the following assumption.
\begin{assumption} \label{ass:KL}
Given a stationary point $x^* \in \OM$, there exists $\eta, \delta > 0$ such that for every $x \in 
[f(x^*) < f < f(x^*) + \eta] \cap B_{\delta}(x^*)$
\begin{equation} \label{hp:klP}
	\pi_x(-\nabla f(x)) \geq \ck (f(x) - f(x^*))^{\frac{1}{2}} \, .
\end{equation}
\end{assumption}

 We refer the reader to the extended version \cite{rinaldi2020unifying} of this article for a study of convergence rates under a more general inequality, interpolating between \eqref{hp:klP} and the generic non convex case. Let now $i_{\OM}$ be the indicator function of $\OM$ so that $i_{\OM}(x) = 0$ in $\OM$ and $i_{\OM}(x) = + \infty$ otherwise. It can easily be seen that \eqref{hp:klP} is a special case of the KL inequality (see, e.g., \cite{attouch2010proximal}, \cite{attouch2013convergence} and \cite{bolte2017error}) with exponent $\frac{1}{2}$ 
\begin{equation}
	\dist(0, \partial f_{\OM}(x)) \geq \ck (f_{\OM}(x) - f_{\OM}(x^*))^{\frac{1}{2}}
\end{equation}
for $f_{\OM} = f + i_{\OM}$, using that
\begin{equation} \label{eq:pxsub}
	\pi_x(-\nabla f(x))  = \dist(-\nabla f(x), N_{\OM}(x)) = \dist(0, \partial(f + i_{\OM})(x))  \, ,
\end{equation}
with the last equality following by Proposition \ref{NWessential}. For convex objectives, condition \eqref{hp:klP} is therefore implied by the Holderian error bound $f(x) - f(x^*) \geq \gamma \dist(x, \mathcal{X}^*)^2$, for $ \mathcal{X}^*$ set of solutions of Problem \eqref{eq:mainpb} (see  \cite[Corollary 6]{bolte2017error}), which in turn is implied by $\mu - $ strong convexity (see, e.g., \cite{karimi2016linear}). Under suitable assumptions (see Proposition \ref{p:PLiKL}) our KL condition is also implied by the classic Polyak-Lojasiewicz inequality $\n{\nabla f(x)} \geq \ck (f(x) - f(x^*))^{\frac{1}{2}}$ (from  \cite{lojasiewicz1963propriete} and \cite{polyak1963gradient}).  Finally, Assumption \ref{ass:KL} is implied by the Luo Tseng error bound \cite{luo1993error} under some mild separability conditions for stationary points (see \cite[Theorem 4.1]{li2018calculus}). This error bound is known to hold in a variety of convex and non convex settings (see Section \ref{s:examples} and references in \cite{li2018calculus}).

\section{An angle condition} \label{SBLB}
Let $\mathcal{A}$ be a first-order optimization method defined for smooth functions on a closed subset $\OM$ of~$\RR^n$. We assume that given first-order information $(x_k, \nabla f(x_k))$ the method always selects $x_{k + 1}$ along a feasible descent direction, so that for $(x, g) \in \OM \times \RR^n$ we can define 
\begin{equation*}
	\Ab(x,g) \subset T_{\OM}(x) \cap \{y \in \RR^n \ \vert \ \s{g}{y} > 0 \} \cup \{0\} \,
\end{equation*}
as the possible descent directions selected by $\Ab$ when $x= x_k$, $g = -\nabla f(x_k)$ for some $k$ (see Algorithm~\ref{tab:1}). When $x$ is first-order stationary, we set $\Ab(x, g) = \{ 0 \}$, otherwise we always assume $0 \notin \Ab(x, g)\neq \emptyset  $.   
\begin{center}
	\begin{table}[h]
		\caption{First-order method}
		\label{tab:1}
		\begin{center}
			\begin{tabular}{l}
				\hline\noalign{\smallskip} 
				\textbf{Initialization.} $x_0 \in \OM$, $k := 0$. \\		
				1. If $x_k$ is stationary, then STOP  \\
				2. select a descent direction  $d_k \in \Ab(x_k, - \nabla f(x_k))$ \\
				3. set $x_{k+1} = x_k + \alpha_k d_k$ for some stepsize $\alpha_k \in [0, \alpha_k^{\max}]$ \\
				4. set $k := k+1$, go to Step 1. \\
				\noalign{\smallskip} \hline
			\end{tabular}
		\end{center}	
	\end{table}
\end{center}

We want to formulate an angle condition for the descent directions selected by $\mathcal{A}$, with respect to the infimum of the angles achieved with feasible descent directions.
In order to do that, we define the directional slope lower bound as
\begin{equation*}
	\tx{DSB}_{\Ab}(\OM, x, g) = \inf_{d \in \Ab(x,g)} \frac{\s{g}{d}}{\pi_x(g) \n{d}} 
\end{equation*}
if $0 \notin \Ab(x,g)$.  Otherwise $x$ is stationary for $-g$, $\pi_x(g) = 0$ and we set $\tx{DSB}_{\mathcal{A}}(\OM, x, g) = 1$. Then with this definition it immediately follows $	\tx{DSB}_{\Ab}(\OM, x, g) \leq 1$ by Proposition \ref{NWessential}. Notice also that when $x \in \tx{int}(\OM)$ then $\tx{DSB}_{\Ab}(\OM, x, g)$ is simply a lower bound on $\cos (\theta_{g, d})$ with $\theta$ the angle between $g$ and a descent direction $d$:
\begin{equation} \label{eq:acunc}
	\tx{DSB}_{\Ab}(\OM, x, g) = \inf_{d \in \Ab(x,g)} \frac{\s{g}{d}}{\n{g} \n{d}} 
\end{equation}
 and thus imposing $\tx{DSB}_{\Ab}(\OM, x, g) \geq \tau$ we retrieve the angle condition \cite[equation (20)]{absil2005convergence}.  We remark that the RHS of \eqref{eq:acunc} defining the unconstrained angle condition is also considered in the constrained setting in \cite{combettes2020boosting} (referred to as alignment condition), as a tool to evaluate potential descent directions. However, without $\pi_{x}(g)$ in the denominator no uniform lower bound can be given for the RHS, and therefore no worst case linear convergence rate (the rate given in \cite[Corollary 3.6]{combettes2020boosting} is in fact $O(1/k)$). \\ 
 Given a subset $P$ of $\OM$ we can finally define the slope lower bound
\begin{equation*}
	\tx{SB}_{\Ab}(\OM, P) = \inf_{\substack{g \in \RR^n \ x \in P}} \tx{DSB}_{\Ab}(\OM, x, g) = \inf_{\substack{g: \pi_x(g) \neq 0 \\ x\in P}} \tx{DSB}_{\Ab}(\OM, x, g)  \, .
\end{equation*}
For simplicity if $P = \OM$ we write $\tx{SB}_{\Ab}(\OM)$ instead of $\tx{SB}_{\Ab}(\OM, \OM)$. \\

We now show a few examples of Frank-Wolfe variants satisfying the following \emph{angle condition}
\begin{equation}\label{eq:ang_cond}
	\tx{SB}_{\Ab}(\OM)=\tau>0,
\end{equation}
i.e. cases where  the slope lower bound is strictly greater than 0. \\

\subsection{Frank-Wolfe variants over polytopes and the angle condition} \label{s:FWv}
We now consider the AFW, PFW and FDFW and show that the angle condition is satisfied when $\Omega$ is a polytope.
The AFW and PFW depend on a set of "elementary atoms" $A$ such that $\OM = \conv(A)$. Given $A$, for a base point $x \in \OM$ we can define
$$S_x = \{S \subset A \ \vert \ x \tx{ is a proper convex combination of all the elements in }S \} \, ,$$ the family of possible active sets for $x$. In the rest of the article $A$ is always clear from the context and for simplicity we write $\tx{PFW}$, $\tx{AFW}$ instead of $\tx{PFW}_A$, $\tx{AFW}_A$. For $x \in \OM$,  $S \in S_x$, $d^{\tx{PFW}}$ is a PFW direction with respect to the active set $S$ and gradient $-g$ iff
\begin{equation} \label{eq:PFW}
	d^{\tx{PFW}} = s - q \textnormal{ with } s \in \argmax_{s \in \OM} \s{s}{g} \textnormal{ and } q \in \argmin_{q \in S} \s{q}{g} \, .
\end{equation}
Similarly, given $x \in \OM$, $S \in S_x$, $d^{\tx{AFW}}$ is an AFW direction with respect to the active set $S$ and gradient $-g$ iff
\begin{equation} \label{AFWdir}
	d^{\tx{AFW}} \in \argmax \{\s{g}{d} \ \vert \ d \in \{d^{\tx{FW}}, d^{AS}\} \} \, ,
\end{equation}
where $d^{\tx{FW}}$ is a classic Frank-Wolfe direction
\begin{equation} \label{eq:FWstep}
	d^{\tx{FW}}  = s - x  \textnormal{ with } s \in \argmax_{s \in \OM} \s{s}{g} \, ,
\end{equation}
and $d^{\tx{AS}}$ is the away direction 
\begin{equation} \label{eq:ASstep}
	d^{\tx{AS}}  = 	x-q  \textnormal{ with } q \in \argmin_{q \in S} \s{q}{g} \,  .
\end{equation}

The FDFW from \cite{freund2017extended}, \cite{guelat1986some} (sometimes referred to as Decomposition invariant Conditional Gradient (DiCG) when applied to polytopes 
\cite{garber2016linear}, \cite{bashiri2017decomposition}) relies only on the current point $x$ and the current gradient $-g$ to choose a descent direction and, unlike the AFW and the PFW, does not need to keep track of the active set. 

The in face direction is defined as
\begin{equation*}
	d^{F} = x_k - x_{F} \tx{ with } x_{F} \in \argmin\{\s{g}{y} \ \vert \ y \in \F(x) \} 
\end{equation*}
for $\F(x)$ the minimal face of $\OM$ containing $x$. 
The selection criterion is then analogous to the one used by the AFW:
\begin{equation} \label{crit:FD}
	d^{\tx{FD}} \in \tx{argmax} \{ \s{g}{d} \ \vert \ d \in \{d^{F}, d^{\tx{FW}} \} \} \, .
\end{equation}
We write $\SB_{\tx{FD}}, \DSB_{\tx{FD}}$ instead of $\SB_{\tx{FDFW}}, \DSB_{\tx{FDFW}}$ in the rest of the paper. When $\OM$ is a polytope and $\vert A\vert  < \infty$, the angle condition holds for the directions and the related FW variants we introduced. Before stating a lower bound for $\tx{SB}_{\Ab}(\OM)$ in this setting we need to recall the pyramidal width constant $\PWidth(A)$
 introduced in \cite{lacoste2015global}. We refer the reader to  \cite{rademacher2020smoothed} and references therein for a discussion of various properties of this and related parameters. 

We use here a characterization of $\PWidth(A)$ proved in \cite{pena2018polytope}:
	\begin{equation} \label{eq:penaPW}
		\PWidth(A) = \min_{\F \in \pfaces({\OM})} \dist(\F , \tx{conv}(A \sm \F)) \, ,
	\end{equation}
	with $\pfaces(\OM)$ the set of proper faces of $\OM$.
	We now introduce one key property of $\tx{PWidth}(A)$ which relates it to the angle along the PFW direction.  While we give a self contained proof of the lemma relying only on \eqref{eq:penaPW}, we remark that the lemma can also be proved using \cite[Theorem 3]{lacoste2015global}.
\begin{lemma} \label{l:PWidth}
	We have the following lower bound
		\begin{equation*} 
			\frac{\s{g}{d^{\tx{PFW}}}}{\n{\pi(T_{\OM}(x), g)}} \geq \textnormal{PWidth}(A) \, .
	\end{equation*}
\end{lemma}
\begin{proof}
	We use $s, q$ and $S$ as in \eqref{eq:PFW}. For $z$ in $\OM$ and $d$ feasible direction in $z$ we define as $\hat{\alpha}^{\max}(z, d)$ the maximal feasible stepsize in the direction $d$. 
		Let $p = \pi(T_{\OM}(x), g)$, and let $y$ be a maximizer of $\hat{\alpha}^{\max}(y, p)$ for $y \in S$. We have
		\begin{equation}
			\begin{aligned}
				&	\Sc{g}{d^{\tx{PFW}}} = \Sc{g}{(s - y) + (y - q)} \geq \Sc{g}{s - y} \geq \Sc{g}{(y + \hat{\alpha}^{\max}(y, p)p) - y} \\ 
				\geq & \frac{\PWidth(A)}{\n{p}} \Sc{g}{p} =  \PWidth(A) \n{p} \, ,  	
			\end{aligned}
		\end{equation} 
		where we used Lemma \ref{l:maxstepsize} in the third inequality, and $ \Sc{g}{p} = \n{p}^2$ as it follows by the Moreau-Yosida decomposition in the last equality.
\end{proof}

In order to define an angle condition for the FDFW, we use the following upper bound on $\PWidth(A)$, independent from the particular set $A$ chosen to represent $\OM$:
	\begin{equation}
		\PFWidth(\OM) = \min_{\substack{\F_1, \F_2 \in \pfaces(\OM) \\ \F_1 \cap \F_2 = \emptyset}} \dist(\F_1, \F_2) \, .
	\end{equation}
\begin{proposition}\label{eqphi}
	$
	\textnormal{SB}_{\tx{PFW}}(\Omega) \geq \tau_p := \frac{\PWidth(A)}{D}  \, , 
	\textnormal{SB}_{\tx{AFW}}(\Omega) \geq \frac{\tau_p}{2} \, , 
	\textnormal{SB}_{\tx{FD}}(\Omega)  \geq \frac{\tau_v}{2} := \frac{\PFWidth(\OM)}{2D} \, . $		
\end{proposition}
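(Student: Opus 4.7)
The plan is to reduce everything to the PFW case by exploiting the algebraic identity $d^{\tx{FW}}+d^{\tx{AS}}=d^{\tx{PFW}}$ together with the pyramidal width bound \eqref{eq:JJ}.

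First I would handle the PFW case. Fix $x\in\OM$ and $g$ with $\pi_x(g)>0$. By Proposition \ref{NWessential}, $\pi_x(g)=\sup_{e\in \OM\sm\{x\}}\s{g}{\widehat{e-x}}$, so $\pi_x(g)$ is realized (up to arbitrary precision) as a slope along some feasible descent direction $e$, and \eqref{eq:JJ} yields $\s{g}{d^{\tx{PFW}}}\geq \PWidth(A)\cdot\pi_x(g)$. Since $d^{\tx{PFW}}=s-q$ with $s,q\in\OM$ we have $\n{d^{\tx{PFW}}}\leq D$, hence
\begin{equation*}
\DSB_{\tx{PFW}}(\OM,x,g)=\frac{\s{g}{d^{\tx{PFW}}}}{\pi_x(g)\n{d^{\tx{PFW}}}}\geq \frac{\PWidth(A)}{D}=\tau_p,
\end{equation*}
and taking the infimum over $(x,g)$ gives the claim for PFW.

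For the AFW, I would observe the key identity $d^{\tx{FW}}+d^{\tx{AS}}=(s-x)+(x-q)=s-q=d^{\tx{PFW}}$, so that $\s{g}{d^{\tx{FW}}}+\s{g}{d^{\tx{AS}}}=\s{g}{d^{\tx{PFW}}}$. Since the selection rule \eqref{AFWdir} takes the one with larger slope,
\begin{equation*}
\s{g}{d^{\tx{AFW}}}\geq \tfrac{1}{2}\s{g}{d^{\tx{PFW}}}\geq \tfrac{1}{2}\PWidth(A)\,\pi_x(g).
\end{equation*}
Both candidate directions have norm at most $D$, so $\DSB_{\tx{AFW}}(\OM,x,g)\geq \tau_p/2$.

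The FDFW case is the one requiring some care. I would apply the PFW bound but with the atom set taken to be $V(\OM)$ (the FDFW does not rely on an active set, so we are free to choose any $A$ with $\OM=\conv(A)$ to invoke \eqref{eq:JJ}). Let $\F(x)$ be the minimal face containing $x$; since $x$ lies in the relative interior of $\F(x)$, the set $V_F:=V(\OM)\cap\F(x)$ belongs to $S_x$. The linear function $\s{g}{\cdot}$ attains its minimum over $\F(x)$ at a vertex, so we may pick $x_F\in V_F$ with $x_F\in\argmin_{q\in V_F}\s{g}{q}$; this means $d^F=x-x_F$ coincides with the away direction produced by PFW on $(V(\OM),V_F)$. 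Hence $d^F+d^{\tx{FW}}=s-x_F=d^{\tx{PFW}}_{V(\OM)}$, and the same averaging argument as for AFW, combined with \eqref{eq:JJ} applied with $A=V(\OM)$, gives
\begin{equation*}
\s{g}{d^{\tx{FD}}}\geq \tfrac{1}{2}\PWidth(V(\OM))\,\pi_x(g),
\end{equation*}
so $\DSB_{\tx{FD}}(\OM,x,g)\geq \tau_v/2$. The main subtlety is precisely this reidentification of $d^F$ as an away atom direction for $A=V(\OM)$, which is what allows the pyramidal width bound to be invoked despite the FDFW being active-set free.
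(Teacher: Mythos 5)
Your proof is correct and follows essentially the same route as the paper's: bound the PFW slope via \eqref{eq:JJ} together with Proposition \ref{NWessential}, then pass to the AFW and FDFW through the decomposition of the PFW direction into a FW part plus an away/in-face part, losing a factor $2$ because the selected direction only dominates the average of the two slopes. The one place where you genuinely deviate is the FDFW case: the paper proves the inequality $\s{g}{d^{\tx{FW}}+d^{F}}\geq\s{g}{d^{\tx{PFW}}}$ for an \emph{arbitrary} active set $S\in S_x$, using only $S\subset\F(x)$, and then recovers $\PWidth(V(\OM))$ at the end by taking the supremum over admissible atom sets $A$ (via the monotonicity $\PWidth(A)\leq\PWidth(V(\OM))$); you instead fix $A=V(\OM)$ from the outset and identify $d^{F}$ exactly with the away direction for the active set $V_F=V(\OM)\cap\F(x)$, obtaining an identity $d^{F}+d^{\tx{FW}}=d^{\tx{PFW}}$ rather than an inequality. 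Your version is slightly more transparent but rests on the additional claim that $V_F\in S_x$, i.e. that a point in the relative interior of a face is a \emph{proper} convex combination of all of that face's vertices; this is true (perturb $x$ away from any vertex with zero weight and average the resulting representations), but since $S_x$ is defined through proper convex combinations you should spell it out, whereas the paper's inequality-based route sidesteps the issue entirely. Both arguments yield the same constants.
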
 
\begin{proof}
	Let $g$ be such that $\pi_x(g) \neq 0$. We have
	\begin{equation*}\label{eq:PFWIN}
		\begin{aligned}
			& \DSB_{\tx{PFW}}(\OM, x, g) = \inf_{d^\tx{PFW} \in \tx{PFW}(x, g)} \frac{\s{g}{d^{\tx{PFW}}}}{\n{d^{\tx{PFW}}}\n{\pi(T_{\OM}(x), g)}} \\
			& \geq \frac{\s{g}{d^{\tx{PFW}}}}{D\n{\pi(T_{\OM}(x), g)}} \geq \frac{\textnormal{PWidth}(A)}{D} \, ,	
		\end{aligned}
	\end{equation*}
	where we used Lemma \ref{l:PWidth} in the last inequality. \\
	Hence $\textnormal{SB}_{\tx{PFW}}(\Omega) \geq \frac{\PWidth(A)}{D}$ follows by taking the inf on the LHS for $x \in \OM$ and $g$ such that $\pi_x(g) \neq 0$ in \eqref{eq:PFWIN}.
	The inequality $\textnormal{SB}_{\tx{AFW}}(\Omega) \geq \frac{\PWidth(A)}{2D}$  is a corollary since
	\begin{equation*}
		\s{g}{d^{\tx{AFW}}} \geq \frac{1}{2}\s{g}{d^{\tx{PFW}}} \, ,
	\end{equation*}
	as it follows immediately from the definitions (see also \cite[equation (6)]{lacoste2015global}). \\		
	The angle condition for the FDFW can be proved analogously to the angle condition for the AFW, where in Lemma \ref{l:maxstepsize} the RHS can be improved with $\PFWidth(\OM)$ instead of $\PWidth(A)$ using that the active set $A'$ can be taken as the set of vertices of a face.
\end{proof}	

\begin{remark} \label{r:1}
	Results analogous to the ones in Proposition \ref{eqphi} can be proven relatively to the vertex facial distance $\tx{vf}(\OM)$ from \cite{beck2017linearly}. More precisely, assuming $A = V(\OM)$, for $V(\OM)$ set of vertices of $\OM$, and that the AFW and the PFW keep active sets of size at most $\bar{s}$, we have $\textnormal{SB}_{\tx{PFW}}(\Omega) \geq \frac{\tx{vf}(\OM)}{\bar{s}D}$, $\textnormal{SB}_{\tx{AFW}}(\Omega) \geq \frac{\tx{vf}(\OM)}{2\bar{s}D}$ as a consequence of \cite[Lemma 3.1]{beck2017linearly}. Furthermore, for the FDFW we have $\SB_{\tx{FD}}(\OM, \OM_{\bar{s}}) \geq \frac{\tx{vf}(\OM)}{2\bar{s}D}$, with $x \in \Omega_{\bar{s}} \subset \Omega$ iff there exists $S \in S_x$ such that  $\vert S\vert  \leq \bar{s}$.
\end{remark}

\subsection{Linear convergence for good steps under the angle condition}
Consider now a method following the scheme described by Algorithm \ref{tab:1} and with stepsize given by 
\begin{equation} \label{def:Lstep}
	\alpha_k = \min \left( \bak, \alpha_k^{\max} \right) \, ,
\end{equation}
where
\begin{equation}\label{eq:step0}
	\bak= \frac{\s{-\nabla f(x_k)}{d_k}}{L \n{d_k}^2}.
\end{equation}
We notice that $\bar \alpha_k$ in \eqref{eq:step0} is a standard stepsize, often used in numerical tests with a properly tuned estimate for $L$ (see, e.g.,~\cite{pedregosa2020linearly}). The following lemma shows that at every iteration a sufficient decrease condition is satisfied, independently from the method $\mathcal{A}$, when using stepsize \eqref{eq:step0}.

\begin{lemma} \label{l:suffdec}
If $\alpha_k \leq \bak$, thus in particular for the stepsize \eqref{def:Lstep}, we have:
 		\begin{equation} \label{eq:suffdec}
 	f(x_{k}) - f(x_{k + 1}) \geq \frac{L}{2}\n{x_k - x_{k + 1}}^2 \, .
 \end{equation}
\end{lemma}
The proof is straightforward and we defer it to the appendix. \\	
Assume now that the method $\mathcal{A}$ used by Algorithm \ref{tab:1} satisfies the angle condition~\eqref{eq:ang_cond}. We say that the algorithm performs a \emph{full FW step} if 
\begin{equation}\label{eq:fullFWs}
	x_{k + 1} \in \argmin_{x \in \OM}\s{\nabla f(x_k)}{x} \, .	
\end{equation}
In the following proposition, we prove a general linear convergence rate in the number of \textit{good steps}, i.e., the steps satisfying $\alpha_k = \bar{\alpha}_k$ or \eqref{eq:fullFWs}, under the assumption that  the method $\Ab$ satisfies the angle condition \eqref{eq:ang_cond}, and that the KL inequality ~\eqref{hp:klP} holds for the objective function $f$ in Problem \eqref{eq:mainpb}. 

\begin{proposition} \label{p:taurate}	
	Let us assume that 	$\Ab$ satisfies the angle condition \eqref{eq:ang_cond}, and the objective function $f$ in Problem \eqref{eq:mainpb} satisfies condition~\eqref{hp:klP} in $x_k$ and $x_{k + 1}$.  
	\begin{itemize}
		\item If $\alpha_k = \bar{\alpha}_k$ then
		\begin{equation} \label{eqt:dec}
			f(x_{k+1}) - f(x^*) \leq \left(	1 - \frac{\mu}{L} \tau^2 \right)(f(x_k) - f(x^*)) \, . 
		\end{equation} 
		\item If  the step $k$ is a full FW step then
		\begin{equation} \label{eqt:dec2}
			f(x_{k+1}) - f(x^*) \leq \left(1 + \frac{\mu}{L} \right)^{-1}  (f(x_k) - f(x^*)) \, .
		\end{equation}
	\end{itemize}
\end{proposition}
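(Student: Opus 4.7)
The plan is to combine the $L$-smoothness descent lemma with the KL inequality \eqref{hp:klP}, handling both bullets through a shared core estimate. In both cases the stepsize rule \eqref{def:Lstep} enforces $\alpha_k \leq \bar{\alpha}_k$: this is automatic when $\alpha_k = \bar{\alpha}_k$, and when $k$ is a full FW step it follows from $\alpha_k = \alpha_k^{\max}$ together with $x_{k+1} \in \argmin_{x \in \OM}\langle \nabla f(x_k), x\rangle$. Starting from $f(x_{k+1}) \leq f(x_k) + \alpha_k \langle \nabla f(x_k), d_k\rangle + \frac{L\alpha_k^2}{2}\|d_k\|^2$ and using $\alpha_k L \|d_k\|^2 \leq \langle -\nabla f(x_k), d_k \rangle$ to absorb the quadratic term, one obtains the unified bound $f(x_k) - f(x_{k+1}) \geq \tfrac{1}{2}\langle -\nabla f(x_k), x_{k+1} - x_k \rangle$.

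For the first bullet, substituting $\alpha_k = \bar{\alpha}_k$ turns the shared estimate into $f(x_k) - f(x_{k+1}) \geq \langle -\nabla f(x_k), d_k\rangle^2 / (2L\|d_k\|^2)$. The angle condition \eqref{eq:ang_cond} yields $\langle -\nabla f(x_k), d_k\rangle \geq \tau \pi_{x_k}(-\nabla f(x_k)) \|d_k\|$, and \eqref{hp:klP} gives $\pi_{x_k}(-\nabla f(x_k))^2 \geq 2\mu (f(x_k) - f(x^*))$; chaining the three inequalities produces $f(x_k) - f(x_{k+1}) \geq (\mu \tau^2/L)(f(x_k) - f(x^*))$, which rearranges to \eqref{eqt:dec}.

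For the second bullet, the full FW condition is equivalent to $L\|x_{k+1} - x_k\|^2 \leq \langle -\nabla f(x_k), x_{k+1} - x_k\rangle$. The crucial observation is that $x_{k+1}$ minimizes the linear functional $\langle \nabla f(x_k), \cdot\rangle$ on $\OM$, so by Proposition~\ref{normalC} we have $-\nabla f(x_k) \in N_\OM(x_{k+1})$, and hence $\pi_{x_{k+1}}(-\nabla f(x_k)) = 0$. Combining the non-expansiveness of projection onto the tangent cone with $L$-smoothness of $\nabla f$ then yields $\pi_{x_{k+1}}(-\nabla f(x_{k+1})) \leq L\|x_{k+1} - x_k\|$; squaring and invoking the full-step bound gives $\pi_{x_{k+1}}(-\nabla f(x_{k+1}))^2 \leq L \langle -\nabla f(x_k), x_{k+1} - x_k\rangle$. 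Applying \eqref{hp:klP} at $x_{k+1}$ therefore produces $\tfrac{1}{2}\langle -\nabla f(x_k), x_{k+1} - x_k\rangle \geq (\mu/L)(f(x_{k+1}) - f(x^*))$, which combined with the unified descent gives $f(x_k) - f(x_{k+1}) \geq (\mu/L)(f(x_{k+1}) - f(x^*))$ and rearranges into \eqref{eqt:dec2}.

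The main obstacle lies in the second bullet. The naive idea of applying \eqref{hp:klP} at $x_k$ would require a lower bound $\langle -\nabla f(x_k), x_{k+1} - x_k\rangle \geq \pi_{x_k}(-\nabla f(x_k))^2/L$, whereas combining Proposition~\ref{NWessential} with the full-step inequality yields only the \emph{reverse} bound. Re-anchoring KL at $x_{k+1}$ and exploiting the normal-cone membership of $-\nabla f(x_k)$ at the LMO vertex is the trick that sidesteps this obstruction.
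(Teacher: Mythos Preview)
Your proof is correct and follows essentially the same strategy as the paper's: for the first bullet both arguments apply the descent lemma with $\alpha_k=\bar\alpha_k$, the angle condition, and \eqref{hp:klP} at $x_k$; for the second bullet both exploit $-\nabla f(x_k)\in N_\Omega(x_{k+1})$ at the LMO, the $1$-Lipschitzness of the projection, and \eqref{hp:klP} at $x_{k+1}$. The only differences are cosmetic---the paper routes the second-bullet estimate through $\tfrac{L}{2}\|x_{k+1}-x_k\|^2$ rather than $\tfrac{1}{2}\langle -\nabla f(x_k),x_{k+1}-x_k\rangle$---and one phrasing slip on your side: the inequality $L\|x_{k+1}-x_k\|^2\le\langle -\nabla f(x_k),x_{k+1}-x_k\rangle$ is not ``equivalent to the full FW condition'' but simply $\alpha_k\le\bar\alpha_k$, which always holds under \eqref{def:Lstep}; what is genuinely specific to a full FW step is the normal-cone membership you invoke immediately after.
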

\begin{proof}
 	Let  $p_k = \n{\pi(T_{\OM}(x_{ k + 1}), -\nabla f(x_{ k + 1}))}$ and $\tilde{p}_k = \n{\pi(T_{\OM}(x_{ k + 1}), -\nabla f(x_k))}$. We have
	\begin{equation} \label{eq:projineq}
		\begin{aligned}
			\vert p_k - \tilde{p}_k\vert  & = \vert\n{\pi(T_{\OM}(x_{ k + 1}), -\nabla f(x_{ k + 1}))} - \n{\pi(T_{\OM}(x_{ k + 1}), -\nabla f(x_{k}))}\vert  \\ &\leq \n{-\nabla f(x_{ k + 1}) + \nabla f(x_{k})} \leq L\n{x_{ k + 1}-x_k} \, ,
		\end{aligned}		
	\end{equation}
	where we used the 1-Lipschitzianity of projections in the first inequality. \\

	If $\alpha_k = \bar{\alpha}_k$ then
	\begin{equation} \label{eq:full}
		\begin{aligned}
			f(x_{k+1}) = & f(x_k + \bar{\alpha}_k d_k) \leq f(x_k) - \frac{1}{2L}\left( \frac{\s{\nabla f(x_k)}{d_k}}{\n{d_k}} \right)^2  
			\leq f(x_k) - \frac{\tau^2}{2L}p_{k - 1}^2 \\ \leq & f(x_k) - \frac{\mu \tau^2}{L}(f(x_k) - f(x^*)) \, 	,
		\end{aligned}
	\end{equation}
	where we used \eqref{eq:stdc} in the first inequality, $\tx{SB}_{\Ab}^f(\OM) = \tau$ in the second one, and condition \eqref{hp:klP} in the third one.\\
	If the step $k$ is a full FW step then $\tilde{p}_k = 0$ because $x_{k + 1} \in \argmin_{y \in \OM} \Sc{\nabla f(x_k)}{y} \Leftrightarrow -\nabla f(x_k) \in N_{\OM}(x_{k + 1}) \Leftrightarrow \n{\pi(T_{\OM}(x_{k + 1}), -\nabla f(x_k) )} = 0$, where the last equivalence is true by Proposition \ref{NWessential}. Then
	\begin{equation} \label{eq:fullFW}
		f(x_{k + 1}) - f(x^*) \leq \frac{p_k^2}{2\mu} \leq \frac{(\tilde{p}_k + L\n{x_{k+1} - x_k})^2}{2\mu} = \frac{L^2}{2\mu} \n{x_{k + 1} - x_k}^2 \leq \frac{L}{\mu} (f(x_{k}) - f(x_{k + 1})) \, ,
	\end{equation}
	where we used \eqref{hp:klP} in the first inequality, \eqref{eq:projineq} in the second, $\tilde p_k = 0$ and \eqref{eq:sdec} in the last inequality. Then \eqref{eq:sdec} and \eqref{eqt:dec2} follow by rearranging \eqref{eq:full} and \eqref{eq:fullFW} respectively. 
\end{proof}	
We finally report an asymptotic rate under the additional assumption that bad steps between two good steps are limited.
	\begin{proposition} \label{p:simpleasymptotic}
		Assume that the number of bad steps between two good steps is limited and that $\Ab$ satisfies the angle condition \eqref{eq:ang_cond}. Then:
		\begin{itemize}
			\item every accumulation point of $\{x_k\}$ is stationary, and $f(x_k)$ is decreasing and convergent to $f^* \in \RR$;
			\item if Assumption \ref{ass:KL} holds for every stationary point in the level set $[f(x) = f^*]$, we have the asymptotic convergence rate:
			\begin{equation}
				f(x_k) - f(x^*) \leq M \bar{q}^{\bar{\gamma}(k)} \, ,
			\end{equation}
			for some $M > 0$, $\bar{\gamma}(k)$ number of good steps among the first $k$ steps and 
			\begin{equation}
				\bar{q} = \max \left( \left(1 + \frac{\mu}{L} \right)^{-1},	\left(	1 - \frac{\mu}{L} \tau^2 \right)\right) \, .
			\end{equation}
		\end{itemize}
	\end{proposition}
	\begin{proof}
		Let $k(j)$ be the subsequence of iterates associated to good steps, so that by assumption $k(j + 1) - k(j)$ is bounded, and define $\tilde{k}(j) = k(j) - 1$ if $\alpha_{k(j)} = \bar{\alpha}_{k(j)}$, $\tilde{k}(j) = k(j)$ otherwise. Notice that $\tilde{k}(j + 1) - \tilde{k}(j)$ is also bounded. By \eqref{eq:sdec} we have that $\{f(x_k)\}$ is decreasing and thus convergent to $f^* \in \RR$, and also that $\n{x_{k} - x_{k + 1}} \rightarrow 0$. With the notation used in Proposition \ref{p:taurate} we now claim $p_{\tilde{k}(j)} \rightarrow 0$. In fact if $\alpha_{k(j)} = \bar{\alpha}_{k(j)}$ then
		\begin{equation}
			p_{\tilde{k}(j)}^2 = p_{k - 1}^2 \leq \frac{2L}{\tau^2}(f(x_k) - f(x_{k + 1}))  \rightarrow 0 \, ,
		\end{equation}
	    where we used \eqref{eq:full} in the inequality, and if $k(j)$ is a full FW step then 
	    \begin{equation}
	    	p_{\tilde{k}(j)} \leq p_{k(j)} \leq \tilde{p}_{k(j)} + L\n{x_{k(j) + 1} - x_{k(j)}} =L\n{x_{k(j) + 1} - x_{k(j)}} \rightarrow 0 \, , 
	    \end{equation}
    where we used \eqref{eq:projineq} in the first inequality and  $\tilde{p}_{k(j)} = 0$ in the equality. \\
    We therefore have $p_{\tilde{k}(j)} \rightarrow 0$. Equivalently, thanks to \eqref{eq:pxsub} we have $\dist(0, \partial f_{\OM}(x_{\tilde{k}(j)})) \rightarrow 0$, so if $x^*$ is a limit point of $x_{\tilde{k}(j)}$ by lower semicontinuity of the subdifferential we must have $0 \in \partial f_{\OM}(x^*)$, i.e., $x^*$ is stationary. In particular, by compactness $\{x_{\tilde{k}(j)}\}$ must converge to the set of stationary points. By the boundedness of $\n{x_{k + 1} - x_k}$ and $\tilde{k}(j + 1) - \tilde{k}(j)$ we also have that the set of limit points of $\{x_k\}$ coincides with the set of limit points of $\{x_{\tilde{k}(j)}\}$, and in particular it is a subset of stationary points contained in $[f(x) = f^*]$. \\
		 Let $\bar{\OM} \subset [f(x) = f^*]$ be the set of limit points of $\{x_k\}$. By compactness (see \cite[Lemma 6]{bolte2014proximal}), we have that for some fixed $\varepsilon, \eta > 0$, the KL property holds for every $x^* \in \bar{\OM}$ with parameters $\varepsilon$ and $\eta$. Then for $k$ large enough $x_k \in B_{\delta}(x^*) \cap [f(x^*) < f < f(x^*) + \eta ]$ for some $x^* \in \OM$, and the asymptotic rates follow by Proposition \ref{p:taurate}.  
\end{proof}


For the three FW variants described before we can now give an asymptotic linear convergence rate in the number of good steps. We refer the reader to Table \ref{tab:2} for bounds on this number.

\begin{corollary}\label{lr:goodsteps}
	Let us assume that the objective function $f$ satisfies Assumption \ref{ass:KL} for every stationary point in the level set $[f(x) = f^*]$ and $\Omega = \tx{conv}(A)$ with $\vert A\vert  < +\infty $ in Problem \eqref{eq:mainpb}. Then the AFW, the PFW and the FDFW converge at a rate
	\begin{equation} \label{eqt:grate}
		f(x_k) - f(x^*) \leq M\qfs^{\bar{\gamma}(k)},
	\end{equation}
	for some $M >0$, with $\bar{\gamma}(k)$ the number of good steps among the first $k$ steps,
	\begin{equation} \label{eqt:AFW}
		\qfs = \max \left( 1 -  \frac{\mu}{L} \left( \frac{\PWidth(A)}{2D} \right)^2, \left(1 + \frac{\mu}{L}\right)^{-1}\right)	
	\end{equation} 
	for the AFW,	
	\begin{equation} \label{eqt:PFW}
		\qfs = 1 -  \frac{\mu}{L}\left( \frac{\PWidth(A)}{D} \right)^2	
	\end{equation}
	for the PFW,  and 
	\begin{equation} \label{eqt:FDFW}
		\qfs = \max \left( 1 -  \frac{\mu}{L} \left( \frac{\PFWidth(\OM)}{2D} \right)^2, \left(1 + \frac{\mu}{L}\right)^{-1} \right)	
	\end{equation}
	for the FDFW.
\end{corollary}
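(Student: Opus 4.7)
The plan is to specialize Proposition \ref{p:taurate} to each variant by plugging in the slope-bound constants $\tau$ from Proposition \ref{eqphi}, and then telescope the per-iteration decrease over the $\bar{\gamma}(k)$ good steps.

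First, I would argue that \emph{every} step (good or bad) satisfies $f(x_{k+1}) \leq f(x_k)$. The stepsize rule \eqref{def:Lstep} forces $\alpha_k \leq \bak$, and inserting this into the descent lemma \eqref{eq:stdc} together with the definition \eqref{eq:step0} of $\bak$ yields
$$f(x_{k+1}) \leq f(x_k) + \tfrac{\alpha_k}{2}\s{\nabla f(x_k)}{d_k} \leq f(x_k),$$
since $d_k$ is a descent direction. Consequently, bad steps contribute only a factor $\leq 1$ in a product bound; all the work is in bounding the contraction at each of the $\bar{\gamma}(k)$ good steps.

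Next, on each good step, Proposition \ref{p:taurate} supplies a contraction factor of either $1 - (\mu/L)\tau^2$, when $\alpha_k = \bak$, or $(1+\mu/L)^{-1}$, when \eqref{eq:fullFWs} holds. Substituting the explicit lower bounds on $\tau = \SB_{\A}(\OM)$ from Proposition \ref{eqphi} --- namely $\PWidth(A)/(2D)$ for AFW, $\PWidth(A)/D$ for PFW, and $\PWidth(V(\OM))/(2D)$ for FDFW --- and taking the worse of the two possible regimes recovers $\qfs$ exactly as written in \eqref{eqt:AFW}, \eqref{eqt:PFW}, and \eqref{eqt:FDFW}. For AFW and FDFW both regimes can genuinely occur, because the FW direction $s - x_k$ is among the candidate directions in \eqref{AFWdir} and \eqref{crit:FD}; hence the outer $\max$ in \eqref{eqt:AFW} and \eqref{eqt:FDFW}. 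For PFW, the pairwise direction $s-q$ does not realize a full FW step \eqref{eq:fullFWs} except in degenerate configurations where it coincides with an $\alpha_k = \bak$ step, which is why \eqref{eqt:PFW} carries no $\max$.

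Telescoping the per-good-step contraction over $\bar{\gamma}(k)$ iterations then yields \eqref{eqt:grate} at once. The main care point I would flag is the PFW case above: one must justify cleanly that the $(1+\mu/L)^{-1}$ branch never needs to enter \eqref{eqt:PFW}, so that no outer $\max$ appears. Everything else amounts to a direct substitution of the Proposition \ref{eqphi} bounds into Proposition \ref{p:taurate}.
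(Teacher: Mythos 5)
Your proof is correct and follows essentially the same route as the paper's: monotonicity of $\{f(x_k)\}$ (which the paper gets from \eqref{eq:sdec}) makes bad steps contribute a factor $\leq 1$, the good-step contractions come from Proposition \ref{p:taurate} combined with the $\tau$ bounds of Proposition \ref{eqphi}, and induction over the good steps gives \eqref{eqt:grate}. The PFW care point you flag is handled in the paper exactly as you suggest, by asserting that the PFW never performs full FW steps, so only the \eqref{eqt:dec} branch enters and no outer $\max$ appears in \eqref{eqt:PFW}.
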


\begin{proof}
	For the AFW and the FDFW the rates \eqref{eqt:AFW} and \eqref{eqt:FDFW} for good steps follow directly from \eqref{eqt:dec} and \eqref{eqt:dec2} together with the bound on $\tau$ given in Proposition \ref{eqphi}. Since the PFW never performs full FW steps, its rate \eqref{eqt:PFW} for good steps follow directly from \eqref{eqt:dec} together with the bound on $\tau$ given in Proposition \ref{eqphi}.  Finally, given that the number of bad steps between two good steps is limited for all these methods (see \cite{lacoste2015global, kolmogorov2020practical}), we have all the assumptions to apply Proposition \ref{p:simpleasymptotic}.
\end{proof}

\section{First order projection free methods with SSC procedure}\label{Section:SSC}	
We introduce here the SSC procedure, and prove convergence rates both  under the KL inequality~\eqref{hp:klP} and in the generic non convex case.

\subsection{The SSC procedure} \label{s:SSC}
The SSC procedure  chains consecutive short steps, thus skipping updates for the gradient (and possibly for related information, like linear minimizers), until proper stopping conditions are met.
Such a procedure, whose detailed scheme is given in Algorithm~\ref{alg:4}, can be easily embedded in a first-order approach (see Algorithm \ref{alg:3}).

\begin{center}
	\begin{table}[h]
		\caption{First-order method with SSC}
		\label{alg:3}
		\begin{center}
			\begin{tabular}{l}
				\hline\noalign{\smallskip} 
				\textbf{Initialization.} $x_0 \in \OM$, $k = 0$.      \\
				1. \textbf{while} $x_k$ is not stationary: \\
				2. \quad $g = -\nabla f(x_k)$ \\
				3. \quad $x_{k+1} = \textnormal{SSC}(x_k, g)$ \\							
				5. \quad $k = k+1$. \\
				\noalign{\smallskip} \hline
			\end{tabular}
		\end{center}	
	\end{table}
\end{center}  

\begin{center}
	\begin{table}[h]
		\caption{SSC$(\xx, g)$ }
		\label{alg:4}
		\begin{center}
			\begin{tabular}{l}
				\hline\noalign{\smallskip} 
				\textbf{Initialization.} $y_0 = \xx$, $j=0$.  \\	
				\textbf{Phase I} \\
				
				1. \quad select $ d_j \in \Ab(y_j, g)$, $\alpha^{(j)}_{\max} \in \alpha_{\max}(y_j, d_j)$ \\
				2. \quad \textbf{if } $d_j = 0$ \textbf{then:} \\
				3. \quad \quad \textbf{return } $y_j$ \\
				\textbf{Phase II}		\\
				4. \quad compute $\beta_j$ with \eqref{betaj} \\					 		 		
				5. \quad let $\alpha_j = \min(\alpha^{(j)}_{\max}, \beta_j)$ \\					
				6. \quad $y_{j+1} = y_j + \alpha_j d_j$ \\
				7. \quad \textbf{if} $\alpha_j =  \beta_j$  \textbf{then}: \\
				8. \quad \ \ \textbf{return } $y_{j+1}$ \\
				9. \quad $j = j+1$, go to Step 1. \\
				\noalign{\smallskip} \hline
			\end{tabular}
		\end{center}	
	\end{table}
\end{center}
Given that the gradient $-g$ is constant during the SSC, this procedure is an application of $\Ab$ for the minimization of the linearized objective $f_g(z) = \s{ - g}{z - \bar{x}} + f(\bar{x})$ with peculiar stepsizes and stopping criterion. More specifically, after a stationarity check (Phase I), the stepsize $\alpha_j$ is computed by taking the minimum between the maximal stepsize $\alpha^{(j)}_{\max}$ (which we always assume to be greater than $0$) and an auxiliary stepsize $\beta_j$. The point $y_{j + 1}$ generated in Phase II is always feasible since $\alpha_j \leq \alpha^{(j)}_{\max}$ is always smaller than the maximal feasible stepsize along the direction $d_j$. Notice that if the method $\mathcal{A}$ used in the SSC performs a FW step (see equation \eqref{eq:FWstep} for the definition of FW step), then the SSC terminates, with $\alpha_j = \beta_j$ or with $y_{j + 1}$ global minimizer of $f_g$. 

The auxiliary step size $\beta_j$ is defined as the maximal feasible stepsize for the trust region
\begin{equation} \label{eq:omegaaux}
	\OM_j =   \BB_{\n{g}/2L}(\bar{x} + \frac{g}{2L}) \cap \BB_{\s{g}{\hat{d}_j}/L}(\bar{x}) 
\end{equation} 
when $y_j \in \Omega_j$, otherwise the method stops returning $y_j$. Summarizing,
\begin{equation} \label{betaj}
	\beta_j = 
	\begin{cases}
		0 &\tx{ if } y_j \notin \OM_j \, , \\
		\beta_{\tx{max}}(\OM_j, y_j, d_j) & \tx{ if } y_j \in \OM_j \, ,
	\end{cases}
\end{equation}
where $\beta_{\max}(\OM_j, y_j, d_j) = \max \{\beta \in \RR_{\geq 0} \ \vert \ y_j + \beta d_j \in \OM_j \}$ is the maximal feasible stepsize in the direction $d_j$ starting from $y_j$ with respect to $\OM_j$. Since $\OM_j$ is the intersection of two balls there is a simple closed form expression for $\beta_j$. In particular, using that $y_0 = \bar{x}$, if $d_0 \neq 0$ we have
\begin{equation*} 
	\beta_0 = \frac{\s{g}{\hat{d}_0}}{L\n{d_0}}\, ,
\end{equation*}
which corresponds to \eqref{def:Lstep} in the non maximal case, and where $\beta_0 > 0$ since $d_0 \neq 0$ is by assumption a descent direction for $-g$. 

Employing the trust region $\Omega_j$ in the definition of $\beta_j$ guarantees the sufficient decrease condition
\begin{equation} \label{eq:dcondy}
	f(y_j)  \leq f(x_k) - \frac{L}{2}\n{x_k - y_j}^2 
\end{equation}	
and monotonicity of the true objective $f$ during the SSC.  

To see why \eqref{eq:dcondy} holds, notice that the second ball $\bar{B} = \BB_{\n{g}/2L}(x_k + \frac{g}{2L}) $ appearing in the definition of $\OM_j$ does not depend on $j$, so that since $y_0 \in \bar{B}$ we have $y_j \in \bar{B}$ for every $j \in [0:T]$, with $T$ maximal iteration index of the SSC. This is enough to obtain \eqref{eq:dcondy} because for every $z \in \bar{B}$ we have
\begin{equation} \label{stdbarB}			
	f(z) \leq f(\bar{x}) - \s{g}{z-\bar{x}} + \frac{L}{2}\n{z - \bar{x}}^2 \leq f(\bar{x}) - \frac{L}{2}\n{\bar{x} - z}^2	\, ,	
\end{equation}
where the first inequality is the standard descent lemma and the second follows from the definition of $\bar{B}$.

We prove that the true objective $f$ is monotone decreasing in the next lemma.
\begin{lemma} \label{l:decreasing}
	Let us assume $y_j \in \BB_{\s{g}{\hat{d}_j}/L}(\bar{x})$. Then for every $\beta \in [0, \beta_j]$ we have 
	\begin{equation*}
		\frac{d}{d\beta} f(y_j + \beta d_j) \leq 0 \, ,
	\end{equation*}	
	and thus in particular $f(y_j + \beta_j d_j) \leq f(y_j)$.
\end{lemma}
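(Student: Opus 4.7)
The key observation is that the trust region $\OM_j$ in \eqref{eq:omegaaux} is designed precisely so that the gradient of $f$ stays close enough to $-g = \nabla f(\x)$ along the segment $[y_j, y_j + \beta_j d_j]$ that $d_j$ remains a descent direction for $f$ itself. My plan is to differentiate $\beta \mapsto f(y_j + \beta d_j)$ and control the derivative by splitting off the linearized term and applying the $L$-Lipschitz bound within the radius enforced by the second ball in \eqref{eq:omegaaux}.

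First I would dispose of the trivial case: if $y_j \notin \OM_j$ then $\beta_j = 0$ by \eqref{betaj}, so $[0, \beta_j] = \{0\}$ and the derivative inequality and final claim both hold vacuously. Otherwise $y_j \in \OM_j$, so by definition $\beta_j = \beta_{\max}(\OM_j, y_j, d_j)$, and since $\OM_j$ is convex every $y_j + \beta d_j$ with $\beta \in [0, \beta_j]$ lies in $\OM_j$; in particular it lies in the ball $\B_{\s{g}{\hat{d}_j}/L}(\x)$, giving
\[
\n{y_j + \beta d_j - \x} \leq \frac{\s{g}{\hat{d}_j}}{L} \, .
\]

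Next I would compute, using $g = -\nabla f(\x)$,
\[
\frac{d}{d\beta} f(y_j + \beta d_j) = \s{\nabla f(y_j + \beta d_j)}{d_j} = \s{\nabla f(y_j + \beta d_j) - \nabla f(\x)}{d_j} - \s{g}{d_j} \, .
\]
By Cauchy-Schwarz and $L$-Lipschitzianity of $\nabla f$, the first term is bounded above by $L \n{y_j + \beta d_j - \x} \n{d_j}$, and plugging in the previous inclusion yields $L \cdot (\s{g}{\hat{d}_j}/L) \cdot \n{d_j} = \s{g}{\hat{d}_j} \n{d_j} = \s{g}{d_j}$. The two terms therefore cancel and $\frac{d}{d\beta} f(y_j + \beta d_j) \leq 0$ on $[0, \beta_j]$. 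Integrating over $[0, \beta_j]$ gives $f(y_j + \beta_j d_j) \leq f(y_j)$, which is the second conclusion.

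There is no real obstacle here: the proof is essentially a calibration check showing that the radius $\s{g}{\hat{d}_j}/L$ of the trust region is tuned exactly to cancel the linearization slope $\s{g}{d_j}$ against the Lipschitz perturbation. The only place to be careful is the convexity argument that $y_j + \beta d_j \in \OM_j$ for all $\beta \in [0, \beta_j]$, which follows from the definition of $\beta_{\max}$ and the convexity of $\OM_j$ as an intersection of balls.
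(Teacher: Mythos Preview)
Your proof is correct and follows essentially the same approach as the paper: differentiate, add and subtract $g = -\nabla f(\x)$, bound the perturbation via $L$-Lipschitzianity and Cauchy--Schwarz, and use that $y_j + \beta d_j$ stays in $\B_{\s{g}{\hat{d}_j}/L}(\x)$ so the two terms cancel. Your write-up is in fact slightly more careful than the paper's, since you explicitly handle the trivial case $\beta_j = 0$ and spell out the convexity argument ensuring the whole segment $[y_j, y_j + \beta_j d_j]$ lies in $\OM_j \subset \B_{\s{g}{\hat{d}_j}/L}(\x)$, whereas the paper simply asserts this containment.
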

\begin{proof}
	We have 
	\begin{equation*}
		\begin{aligned}
			& \frac{d}{d\beta} f(y_j + \beta d_j) = \n{d_j} \s{\nabla f(y_j + \beta d_j)}{\hat{d}_j}  \\
			= & \n{d_j} \s{ (\nabla f(y_j + \beta d_j) + g) - g}{\hat{d}_j} = \n{d_j} (\s{\nabla f(y_j + \beta d_j) + g}{\hat{d}_j} - \s{g}{\hat{d}_j}) \\ 
			\leq & \n{d_j} (L\n{\bar{x}-y_j - \beta d_j} - \s{g}{\hat{d}_j}) \leq 0 \, ,
		\end{aligned}		 
	\end{equation*}
	where we used $g = -\nabla f(\bar{x})$ and the Lipschitzianity of $\nabla f$ in the first inequality and $$y_j + \beta_j d_j \in \BB_{\s{g}{\hat{d}_j}/L}(\bar{x})$$ in the second. 
\end{proof}

The next result illustrates how the sequence $\{x_k\}$ generated by Algorithm \ref{alg:3} satisfies certain descent conditions. This is an adaptation to our setting of the ones used in the analysis of many proximal type gradient methods (see \cite{attouch2010proximal}, \cite{attouch2013convergence}, \cite{bolte2017error} and references therein). A subtle difference is the introduction of an "hidden sequence" $\{\tilde{x}_k\}$ to control the projection of the negative gradient on the tangent cone. 
\begin{proposition} \label{keyprop}
	Let us consider the sequence $\{x_k\}$ generated by 
	Algorithm \ref{alg:3} and assume  that 
	\begin{itemize}
		\item the angle condition \eqref{eq:ang_cond} holds;
		\item the SSC condition terminates in a finite number of steps.
	\end{itemize}
	Then
	\begin{align}
		f(x_k) - f(x_{k +  1}) & \geq \frac{L}{2}\n{x_k - x_{k +  1}}^2 \, , \label{dq} \\
		\n{x_k - x_{k +  1}} & \geq  \K \n{\pi(T_{\OM}(\tilde{x}_k), -\nabla f(\tilde{x}_k))} \label{dq2}
	\end{align}	      
	for some $\tilde{x}_k \in \{y_j\}_{j=0}^T$ such that $f(x_{k +  1}) \leq f(\tilde{x}_k) \leq f(x_k) - \frac{L}{2} \n{x_k - \tilde{x}_k}^2$,  $\n{\tilde{x}_k - x_k} \leq \n{x_{k + 1} - x_k}$  and for $\K =\tau/(L(1+\tau))$.
\end{proposition}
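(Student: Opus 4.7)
The plan is to dispatch \eqref{dq} immediately from the trust region structure and then to establish \eqref{dq2} by a three-way split on the termination mode of the SSC. Writing $\{y_j\}_{j=0}^T$ for the SSC trajectory with $y_0 = x_k$, $y_T = x_{k+1}$, and $g = -\nabla f(x_k)$, the claim \eqref{dq} is immediate: every $y_j$ lies in the big ball $\B = \B_{\n{g}/(2L)}(x_k + g/(2L))$ by construction of $\OM_j$, so \eqref{stdbarB} applied at $y_T = x_{k+1}$ gives the required quadratic decrease.

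For \eqref{dq2} I would locate a witness $\tilde x_k \in \{y_j\}_{j=0}^T$ case by case, relying on two recurring ingredients: (i) the $L$-Lipschitzianity of $\nabla f$, combined with the $1$-Lipschitzianity of $v \mapsto \pi_y(v) = \dist(v, N_{\OM}(y))$, gives $|\pi_y(-\nabla f(y)) - \pi_y(g)| \le L\n{y - x_k}$; (ii) whenever $\A$ has selected a nonzero direction $d_j$ at $y_j$, the angle condition \eqref{eq:ang_cond} gives $\pi_{y_j}(g) \le \s{g}{\hat d_j}/\tau$. In Case A (Phase I exit, $d_T = 0$), $\pi_{y_T}(g) = 0$ by stationarity for $-g$, so $\tilde x_k = y_T$ already satisfies $\pi_{y_T}(-\nabla f(y_T)) \le L\n{x_{k+1} - x_k}$ via (i) alone. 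In Case B (Phase II exit, $\alpha_{T-1} = \beta_{T-1}$) with $y_T$ not on the big ball, I take $\tilde x_k = y_{T-1}$: whether $\beta_{T-1} > 0$ (so $y_T$ lies on the directional ball $\B_{\s{g}{\hat d_{T-1}}/L}(x_k)$) or $\beta_{T-1} = 0$ (so $y_T = y_{T-1} \notin \OM_{T-1}$), the trust region structure forces both $L\n{y_{T-1} - x_k} \le L\n{y_T - x_k}$ and $\s{g}{\hat d_{T-1}} \le L\n{y_T - x_k}$, and combining (i) and (ii) with these bounds yields $\pi_{y_{T-1}}(-\nabla f(y_{T-1})) \le \frac{L(1+\tau)}{\tau}\n{x_{k+1}-x_k}$, exactly \eqref{dq2}.

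The main obstacle is the remaining Case B sub-case: Phase II exit with $y_T$ on the big ball $\B$. Expanding the sphere equation gives the identity $L\n{y_T - x_k}^2 = \s{y_T - x_k}{g}$. The subtlety is that $\tilde x_k = y_{T-1}$ no longer carries useful directional information (the binding constraint is on $g$, not on $d_{T-1}$), so I would take $\tilde x_k = y_0 = x_k$: by convexity of $\OM$, $y_T - x_k \in T_{\OM}(x_k)$, and the Moreau decomposition of $g$ with respect to the polar pair $(T_{\OM}(x_k), N_{\OM}(x_k))$ gives $\s{y_T - x_k}{g} \le \n{y_T - x_k}\,\pi_{x_k}(g)$. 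Substituting into the identity yields $L\n{x_{k+1} - x_k} \le \pi_{x_k}(-\nabla f(x_k))$, again stronger than \eqref{dq2}. The descent chain $f(x_{k+1}) \le f(\tilde x_k) \le f(x_k) - (L/2)\n{x_k - \tilde x_k}^2$ is immediate in every case: the first inequality follows from monotonicity of $f$ along the SSC (Lemma \ref{l:decreasing}), the second from $\tilde x_k \in \B$ and \eqref{stdbarB}, trivially so when $\tilde x_k = x_k$.
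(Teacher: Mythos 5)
Your handling of \eqref{dq}, of the Phase~I exit ($d_T=0$, where $\tilde x_k=y_T$ and $\pi_{y_T}(g)=0$), and of the Phase~II exits where the binding constraint is the directional ball $\B_{\s{g}{\hat d_{T-1}}/L}(x_k)$ (including the degenerate case $\beta_{T-1}=0$, $y_T=y_{T-1}$) is correct and coincides with the paper's Cases~1--3. The gap is in the last sub-case, termination on the boundary of the big ball $\B_{\n{g}/2L}(x_k+g/(2L))$. There the sphere equation gives $L\n{x_{k+1}-x_k}^2=\s{g}{x_{k+1}-x_k}$, and your Moreau-decomposition step does correctly yield $\s{g}{x_{k+1}-x_k}\le \n{x_{k+1}-x_k}\,\pi_{x_k}(g)$, hence $L\n{x_{k+1}-x_k}\le \pi_{x_k}(-\nabla f(x_k))$. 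But this inequality points the wrong way: \eqref{dq2} with $\tilde x_k=x_k$ demands $\n{x_{k+1}-x_k}\ge \K\,\pi_{x_k}(-\nabla f(x_k))$, a \emph{lower} bound on the step length in terms of the stationarity measure, whereas you have produced an \emph{upper} bound. Your bound is consistent with an arbitrarily short step at a highly non-stationary point, so nothing in this case has been proved. Nor can the witness $\tilde x_k=x_k$ be rescued by invoking the angle condition at $y_0$: that gives $\pi_{x_k}(g)\le \s{g}{\hat d_0}/\tau$, but $\s{g}{\hat d_0}$ need not be controlled by $L\n{x_{k+1}-x_k}$, since the sphere identity only controls the slope of the \emph{aggregate} displacement and the first direction may have a much larger slope than the aggregate.

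The paper's resolution (its Case~4) is to choose the witness as the iterate $y_{\tilde T}$ whose direction achieves the \emph{smallest} slope, $\tilde T\in\argmin_j \s{g}{\hat d_j}$. The mediant inequality $\s{g}{\sum_j\alpha_j d_j}/\sum_j\alpha_j\n{d_j}\ge\min_j\s{g}{\hat d_j}$ together with $\n{\sum_j\alpha_j d_j}\le\sum_j\alpha_j\n{d_j}$ gives $\s{g}{\hat d_{\tilde T}}\le \s{g}{\widehat{x_{k+1}-x_k}}=L\n{x_{k+1}-x_k}$ (see \eqref{eq:fracineq}), and a separate argument using $\s{g}{d_i}\ge 0$ and the sphere identity shows $\n{y_j-x_k}\le\n{x_{k+1}-x_k}$ for every $j$ (see \eqref{eq:claim2}); the same combination of the Lipschitz perturbation bound and the angle condition that you use in your other cases, applied now at $y_{\tilde T}$, then closes the argument. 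Some version of this minimal-slope selection is needed; the big-ball case cannot be dispatched at $y_0$ by the Moreau decomposition alone.
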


\subsection{SSC for Frank-Wolfe variants} \label{pfwafw}

	In this section, we show how to apply our results to the PFW, the AFW and the FDFW on polytopes, i.e., we  prove finite termination of the SSC procedure when one of these methods is considered in Algorithm \ref{alg:3}. We also give worst case and average worst case bounds for the number of iterations of the SSC.
We start by proving a general termination criterion.
\begin{lemma} \label{l:termination}
	Assume that the method $\Ab$ applied to any linear function $L_g(x) = - \Sc{g}{x}$ on the feasible set $\OM$ and with every stepsize maximal always terminates in at most $T$ iterations with an optimal solution, i.e. generates a sequence $\{y_j\}_{j \in [0, T']}$ with $T' \leq T$ and $y_{T'} \in \argmin_{x \in \OM} L_{g}(x)$. Then the SSC with the method $\Ab$ on the feasible set $\OM$ always terminates in at most $T$ iterations.
\end{lemma}
\begin{proof}
	Assume by contradiction that the SSC does at least $T + 1$ iterations, generating the sequence $\{y_j\}_{j \in [0:T+1]}$ before terminating. Notice that in this case the SSC must always do maximal steps for $j \in [0:T]$, because it terminates at step 8 when $\alpha_j = \beta_j$ and in particular if $\alpha_j < \alpha^{(j)}_{\max}$. Then for some $T' \leq T$ we must have that $y_{T'} \in \argmin_{x \in \OM} L_{g}(x)$, which gives a contradiction because in this case the method can't find a feasible descent direction in Phase I and terminates returning $y_{T'}$. 
\end{proof}
	\begin{remark} \label{r:finiteT}
	Using the same line of reasoning, it is not difficult to prove that the SSC always terminates if the method $\Ab$ applied to linear objectives and with stepsizes always maximal generates a (possibly finite) sequence $\{y_j\}$ satisfying
	\begin{equation}
		\liminf \pi_{y_j}(g) = 0 \, .
	\end{equation}
\end{remark}

We now denote with $\{S^{(j)}\}$ the sequence of active sets generated by the AFW and the PFW method in the SSC, and with $y_j$ proper convex combination of the elements in $S^{(j)}$. Furthermore, for the FDFW we assume that the maximal stepsize is given by feasibility conditions as in \cite{freund2017extended}:
\begin{equation} \label{eq:alphamax}
	\alpha_{\max}(x, d) = \max \{\alpha \in \RR_{\geq 0} \ \vert \ x + \alpha d \in \OM\} \, .
\end{equation}
Notice that after a maximal in face step from $y_j$ we have $\tx{dim} (\mathcal{F}(y_{j+1})) < \tx{dim}(\mathcal{F}(y_{j}))$ because $y_{j+1}$ lies on the boundary of $\mathcal{F}(y_j)$. 

	\begin{proposition} \label{p:SSCworst}
		The SSC always terminates in at most:
		\begin{itemize}
			\item $\vert A \vert$ iterations for the AFW,
			\item $\vert A \vert - 1$ iterations for the PFW,
			\item $\dim(\OM) + 1$ iterations for the FDFW.
		\end{itemize}
	\end{proposition}
	\begin{proof}
		By Lemma \ref{l:termination} we just need to bound the maximum number of iterations if the method performs always maximal steps for a linear objective $L_g(x)$. The AFW can do at most $\vert A \vert - 1 $ consecutive maximal away steps, since at every such step the number of active atoms decreases by one. Analogously, the FDFW can do at most $\dim(\OM)$ consecutive maximal in face steps, since at every such steps the dimension of the minimal face containing the current iterate decreases by one. The respective bound follows Lemma \ref{l:termination} by noticing that in the linear case the methods terminate after a full FW step. For the PFW, the linearity of the objective implies that only atoms in $\bar{A}:=\argmax_{a \in A}\Sc{g}{x}$ can be added to the support, and only atoms in $A \sm \bar{A}$ can be dropped from the support. In particular, once an atom is dropped from the active set it cannot be added again, and since at every maximal step the PFW drops an atom from the active set its maximal number of iterations is $\vert A \sm \bar{A} \vert \leq \vert A \vert - 1$.
	\end{proof}
	\begin{proposition} \label{p:SSCworstaverage}
		Assume that the linear minimizer is not changed during the SSC. Then, for an infinite sequence $\{x_k\}$, the worst case average number of iterations is
		\begin{itemize}
			\item 2 for the AFW and the PFW,
			\item $\Delta(\OM) + 1$ for the FDFW.
		\end{itemize}
	\end{proposition}
The proof uses analogous arguments to the ones in \cite[Theorem 8]{lacoste2015global} to bound the number of bad steps and we defer it to the appendix. 

\subsection{Convergence rates}
\subsubsection{Smooth non convex objectives}
We first prove, in the generic smooth non convex case, convergence to the set of stationary points with a rate of $O(\frac{1}{\sqrt{k}})$ for $\n{\pi(T_{\OM}(\tilde{x}_i), - \nabla f (\tilde{x}_i))}$.
\begin{theorem} \label{cor:stationary}
	Let us  consider the sequence $\{x_k\}$ generated by  Algorithm \ref{alg:3} and assume that  
	\begin{itemize}
		\item the angle condition \eqref{eq:ang_cond} holds;
		\item 	the SSC procedure always terminates in a finite number of steps.
	\end{itemize}		
	Then  $\{f(x_k)\}$ is decreasing, $f(x_k) \rightarrow~\tilde{f} \in \mathbb{R}$ and the limit points of $\{x_k\}$ are stationary. Furthermore, for any sequence $\{\tilde{x}_k\}$ satisfying the conditions of Proposition \ref{keyprop}, we have $\n{\tilde{x}_k - x_k} \rightarrow 0$, and
	\begin{equation} \label{eq:qsqrk}
		\min_{0 \leq i \leq k} \n{\pi(T_{\OM}(\tilde{x}_i), - \nabla f (\tilde{x}_i))} \leq \min_{0 \leq i \leq k} \frac{\n{x_{i + 1} - x_i}}{\K} \leq  \sqrt{ \frac{2(f(x_0) - \tilde{f})}{\K ^2L(k + 1)}} \, ,
	\end{equation}
	for $\K = \tau/ (L(1 + \tau))$.
\end{theorem}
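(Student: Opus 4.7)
The plan is to exploit the two inequalities furnished by Proposition~\ref{keyprop}. From \eqref{dq} the sequence $\{f(x_k)\}$ is non-increasing; since $\OM$ is compact and $f\in C^1(\OM)$, $f$ is bounded below on $\OM$, hence $f(x_k)\to \tilde f$ for some $\tilde f\in\R$. This handles the first two claims of the statement, and telescoping \eqref{dq} over $i=0,\dots,k$ yields
\begin{equation*}
\frac{L}{2}\sum_{i=0}^{k}\n{x_{i+1}-x_i}^2 \;\leq\; \sum_{i=0}^{k}\bigl(f(x_i)-f(x_{i+1})\bigr) \;=\; f(x_0)-f(x_{k+1}) \;\leq\; f(x_0)-\tilde f,
\end{equation*}
so in particular $\min_{0\leq i\leq k}\n{x_{i+1}-x_i}^2 \leq 2(f(x_0)-\tilde f)/(L(k+1))$. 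Plugging this into \eqref{dq2} gives the chain of inequalities in \eqref{eq:qsqrk}.

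For the stationarity of limit points I would reason as follows. The telescoping bound shows $\n{x_{k+1}-x_k}\to 0$, so by \eqref{dq2}, $\n{\pi(T_{\OM}(\tilde x_k),-\nabla f(\tilde x_k))}\to 0$. Moreover the property $f(\tilde x_k)\leq f(x_k)-\tfrac{L}{2}\n{x_k-\tilde x_k}^2$ from Proposition~\ref{keyprop}, combined with $f(\tilde x_k)\geq f(x_{k+1})$, yields $\n{x_k-\tilde x_k}^2 \leq \tfrac{2}{L}(f(x_k)-f(x_{k+1}))\to 0$. Hence if $x_{k_j}\to x^*$ for some subsequence, then also $\tilde x_{k_j}\to x^*$.

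It remains to pass to the limit in $\n{\pi(T_{\OM}(\tilde x_{k_j}),-\nabla f(\tilde x_{k_j}))}\to 0$ and conclude stationarity of $x^*$. This is the main technical obstacle, because the tangent cone $T_{\OM}(\cdot)$ is not continuous on the boundary of $\OM$ in general. I would invoke Proposition~\ref{NWessential} to rewrite $\n{\pi(T_{\OM}(x),-\nabla f(x))}=\dist(-\nabla f(x),N_{\OM}(x))$ and then use the outer semicontinuity of the normal cone mapping $x\mapsto N_{\OM}(x)$ for closed convex $\OM$ (a standard fact from \cite{rockafellar2009variational}): outer semicontinuity of a closed-convex-valued mapping implies lower semicontinuity of the distance function $(x,g)\mapsto \dist(g,N_{\OM}(x))$ jointly in its arguments. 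Continuity of $\nabla f$ and $\tilde x_{k_j}\to x^*$ then force $\n{\pi(T_{\OM}(x^*),-\nabla f(x^*))}\leq \liminf_j \n{\pi(T_{\OM}(\tilde x_{k_j}),-\nabla f(\tilde x_{k_j}))}=0$, which by the remark after Proposition~\ref{NWessential} is exactly first-order stationarity of $x^*$.
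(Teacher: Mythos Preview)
Your proof is correct and follows essentially the same architecture as the paper's: telescope \eqref{dq} to get the rate \eqref{eq:qsqrk} and to force $\n{x_{k+1}-x_k}\to 0$, then transfer convergence from $\{x_k\}$ to $\{\tilde x_k\}$ and pass to the limit in \eqref{dq2}. Two points are worth highlighting. First, your derivation of $\n{x_k-\tilde x_k}\to 0$ is cleaner than the paper's: you use only the sandwich $f(x_{k+1})\leq f(\tilde x_k)\leq f(x_k)-\tfrac{L}{2}\n{x_k-\tilde x_k}^2$ stated in Proposition~\ref{keyprop}, whereas the paper re-enters the case analysis of that proposition's proof to obtain the sharper bound $\n{\tilde x_k-x_k}\leq\n{x_{k+1}-x_k}$. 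Second, you make explicit the step the paper leaves implicit, namely the passage from $\dist(-\nabla f(\tilde x_{k_j}),N_\OM(\tilde x_{k_j}))\to 0$ with $\tilde x_{k_j}\to x^*$ to $-\nabla f(x^*)\in N_\OM(x^*)$; your appeal to outer semicontinuity of $x\mapsto N_\OM(x)$ for closed convex $\OM$ (hence lower semicontinuity of the associated distance) is exactly the right justification.
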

We now give a corollary for Theorem \ref{cor:stationary} specialized to the FW variants described in Section \ref{s:FWv}  (see also Table \ref{tab:5}). 

\begin{corollary} \label{cor:gennc}
	Let us  assume that $\Omega = \tx{conv}(A)$, with $\vert A\vert  < +\infty $ in Problem \eqref{eq:mainpb}. Then the sequence $\{x_k\}$  generated by  Algorithm \ref{alg:3} with AFW (PFW or  FDFW) in the SSC
	converges at a rate given by equation~\eqref{eq:qsqrk}, with $\tau = \tau_p/2$ ($\tau_p$ or $\tau_v/2$, respectively).
\end{corollary}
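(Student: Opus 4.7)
The plan is to observe that Corollary \ref{cor:gennc} is a direct specialization of Theorem \ref{cor:stationary} to the AFW, PFW, and FDFW on the polytope $\OM = \conv(A)$, so the work reduces entirely to checking the two hypotheses of that theorem for each variant: the angle condition \eqref{eq:ang_cond} and finite termination of the SSC. This mirrors exactly the structure of the proof of Corollary \ref{cor:klrate}, simply with the KL-based conclusion replaced by the non-convex conclusion \eqref{eq:qsqrk}.

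First I would invoke Proposition \ref{eqphi} to verify the angle condition with the stated slope lower bounds: $\SB_{\tx{AFW}}(\OM) \geq \tau_p/2$, $\SB_{\tx{PFW}}(\OM) \geq \tau_p$, and $\SB_{\tx{FD}}(\OM) \geq \tau_v/2$, where $\tau_p = \PWidth(A)/D$ and $\tau_v = \PWidth(V(\OM))/D$. These are exactly the values of $\tau$ claimed in the statement for each of the three variants.

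Next I would verify finite termination of the SSC procedure. For the AFW and PFW this is immediate from Proposition \ref{AFWPFWfiniteTerm}: the procedure terminates in at most $|S^{(0)}|$ steps since the active set monotonically shrinks apart from the addition of the linear minimizer, which itself triggers termination (by being a full FW step). For the FDFW, Proposition \ref{FDFWfiniteterm} gives termination in at most $\dim(\OM) + 1$ consecutive maximal in-face steps, as each maximal in-face step strictly decreases the dimension of the minimal face containing the iterate and the procedure exits upon encountering a FW step.

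With both hypotheses verified for each variant, the conclusion \eqref{eq:qsqrk} of Theorem \ref{cor:stationary} applies verbatim with $K = \tau/(L(1 + \tau))$ for the corresponding value of $\tau$. There is no substantive obstacle here; all the genuine work — the angle condition bounds in Proposition \ref{eqphi} and the combinatorial termination arguments in Propositions \ref{AFWPFWfiniteTerm} and \ref{FDFWfiniteterm} — has already been carried out, so the proof is essentially a single sentence citing these three results and then Theorem \ref{cor:stationary}.
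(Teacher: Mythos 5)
Your proposal is correct and follows exactly the same route as the paper: verify the angle condition via Proposition \ref{eqphi}, establish finite termination of the SSC via Propositions \ref{AFWPFWfiniteTerm} and \ref{FDFWfiniteterm}, and then apply Theorem \ref{cor:stationary}. The values of $\tau$ you identify for each variant match those in the statement, so nothing is missing.
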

\begin{proof}
	Finite termination of the SSC follows by Proposition \ref{p:SSCworst}, and the angle condition is satisfied by Proposition \ref{eqphi}. Thus we have all the assumptions to apply Theorem \ref{cor:stationary}.
\end{proof}
\setcounter{table}{1}
\captionsetup[table]{name=Table, labelfont=bf}
\begin{table}
	\centering
	\bgroup
	\def\arraystretch{2.1}
	\scalebox{0.78}{\begin{tabular}{|l|c|c|c|c|c|} 
			\hline
			Algorithm & Article & LMO c.r.       & Gradient c.r.  & Gap                   \\
			\hline
			NCGS               & \cite{qu2018non}                & $O\left(\frac{1}{k^{0.25}}\right)$ & $O\left(\frac{1}{\sqrt{k}}\right)$ & $\min_{0 \leq i \leq k}\pi(x_i)$ \\ \hline
			AFW, FW            & \cite{bomze2020active}, \cite{lacoste2016convergence}                & $O\left(\frac{1}{\sqrt{k}}\right)$ & $O\left(\frac{1}{\sqrt{k}}\right)$ & $\min_{0 \leq i \leq k}G(x_i)$                     \\ \hline
			AFW, PFW, FDFW + SSC     & Ours                & $O\left(\frac{1}{\sqrt{k}}\right)$                    & $O\left(\frac{1}{\sqrt{k}}\right)$ & $\min_{0 \leq i \leq k} \n{\pi(T_{\OM}(\tilde{x}_i), - \nabla f (\tilde{x}_i))}$  \\           \hline              
	\end{tabular}}
	\egroup
	\caption{Comparison between convergence rates in the generic smooth non convex case. See also Remark \ref{r:ncrates}. 
		$\pi(x) = \n{x - \pi\left(\OM, x - \frac{\nabla f(x)}{2L}	\right)}$, $G$ is the FW gap (see \eqref{def:FWgap}).}
	\label{tab:5} 
\end{table}

\begin{remark} \label{r:ncrates}
	Let $G: \OM \rightarrow \RR_{\geq 0}$ be the FW gap (see, e.g., \cite{lacoste2016convergence}): 
	\begin{equation} \label{def:FWgap}
		G(x) = \max_{s \in \OM} \s{-\nabla f(x)}{s-x} \, .
	\end{equation} 
	Then, for any $y \in \OM$ 
	\begin{equation}\label{eq:gapsin}
		G(y) = \max_{s \in \Omega} \s{-\nabla f(y)}{s - y} = \max_{s \in \Omega \sm \{y\}} \n{s - y} \s{-\nabla f(y)}{\frac{s - y}{\n{s - y}}} \leq D \n{\pi (T_{\OM}(y), -\nabla f(y))} \, ,
	\end{equation}
	where the inequality follows from Proposition \ref{NWessential}.
\end{remark}
Taking into account equation \eqref{eq:gapsin}, it is easy to see that our rate is an improvement of the ones proved in  \cite{lacoste2016convergence} and \cite{bomze2020active} (see Table \ref{tab:5}).
Furthermore, we do not need to start from a vertex to avoid dependence from the support of $\{x_0\}$ like in \cite[Theorem 5.1]{bomze2020active}. Finally, our method improves the conditional gradient sliding rate (NCGS) not only in LMO but also in gradients, given that from $\OM - \{y\} \subset T_{\OM}(y)$ it follows $\pi(y) \leq \n{\pi (T_{\OM}(y), -\nabla f(y))}/2L$ for every $y \in \OM$.

\subsubsection{Objectives with KL property}
As a consequence of Proposition \ref{keyprop}, we have linear convergence rates for the general algorithmic scheme reported in Algorithm \ref{alg:3}  under the KL inequality \eqref{hp:klP}, the angle condition \eqref{eq:ang_cond},  and finite termination of the SSC procedure.
%
In the next results (Lemma \ref{p:crkl}, Theorem \ref{t:loc} and Corollary \ref{cor:glob}), we always assume the following:
	\begin{itemize}
		\item the angle condition \eqref{eq:ang_cond} holds;
		\item the SSC procedure always terminates in a finite number of steps.
\end{itemize}
\begin{lemma}\label{p:crkl}
	Let us  consider the sequence $\{x_k\}$ generated by  Algorithm \ref{alg:3} and assume that 
	the objective function $f$ satisfies condition \eqref{hp:klP}, with $f(x^*)$ fixed, in every feasible point generated by the algorithm.
	Then, for $q = \left(1 + \frac{\mu}{L}\frac{\tau^2}{(1 + \tau)^2}\right)^{-1}$ we have $f(x_k) \rightarrow f(x^*)$, with
	\begin{equation} \label{th:qdec}
		f(x_k) - f(x^*) \leq q^k (f(x_0) - f(x^*))	\, ,
	\end{equation}
	and $x_k \rightarrow \tilde{x}^*$ with
	\begin{equation} \label{th:taillen}
		\n{x_k - \tilde{x}^*} \leq \frac{\sqrt{2-2q} (f(x_0) - f(\tilde x^*))}{\sqrt{L}(1-\sqrt{q})} q^{\frac{k}{2}} \, ,
	\end{equation}
	for $\tilde{x}^*$ stationary point such that $f(\tilde{x}^*) = f(x^*)$.
\end{lemma}
\newcommand{\tdelta}{\tilde{\delta}}
As an example, the assumption of Lemma \ref{p:crkl} is clearly satisfied if \eqref{hp:klP} holds globally, corresponding to a constrained version of the global PL property used in \cite{karimi2016linear}. By \cite[Corollary 6]{bolte2017error}, for convex objectives this assumption is satisfied in particular under a global quadratic Holderian error bound, thus, e.g., by strongly convex objectives. \\
 Under mild assumptions on the stationary point $x^*$, we can also apply Lemma~\ref{p:crkl} locally on non convex objectives, thus adapting to our projection free setting the local results given in \cite[Section 2.3]{attouch2013convergence} for proximal methods.
	\begin{theorem} \label{t:loc}
		Let Assumption \ref{ass:KL} hold at $x^*$. Further assume that $x_k \in B_{\delta}(x^*) \Rightarrow f(x_{k + 1}) \geq f(x^*)$. Then, for some $\tdelta > 0$, if $x_0 \in B_{\tdelta}(x^*)$ the rates \eqref{th:qdec} and \eqref{th:taillen} hold.
	\end{theorem}
	It is not difficult to see that the assumption $x_k \in B_{\delta}(x^*) \Rightarrow f(x_{k + 1}) \geq f(x^*)$ is true, e.g.,  if $x^*$ is a minimizer on its connected component of the sublevel set $[f \leq f(x_0)]$. \\
	
As a corollary of Theorem \ref{t:loc}, we can apply Lemma \ref{p:crkl} and derive the following asymptotic rates.
	\begin{corollary} \label{cor:glob}
		Let us  consider the sequence $\{x_k\}$ generated by  Algorithm \ref{alg:3}. Let Assumption \ref{ass:KL} hold at every point of the limit set of $\{x_k\}$. Then, for some positive constants $M$ and $\bar{M}$, $\{x_k\} \rightarrow x^*$, with the asymptotic rates:
		\begin{equation}
			\begin{aligned}
				& f(x_k) - f(x^*) \leq Mq^k \, , \\
				& \n{x_k - x^*} \leq \bar{M}q^{\frac{k}{2}} \, .
			\end{aligned}
		\end{equation}
	\end{corollary}


\setcounter{table}{0} \renewcommand{\thetable}{\arabic{table}}
\captionsetup[table]{name=Table, labelfont=bf}
\newcommand{\kgd}{N_{g}}	
\newcommand{\qw}{q_w}

\begin{table} 
	\centering
	\bgroup
	\def\arraystretch{2.1}
	\scalebox{0.6}{\begin{tabular}{|l|c|c|c|c|c|c|c|} 
			\hline
			Algorithm & Article & Objective & $\gamma(k)$                   & $I_b$                                                & $\bqfs$      & $h_k/h_0$ upper bound   & $T_{avg}$          \\ \hline
			AFW                           & \cite{lacoste2015global}       & SC        & $k/2$                       & $\vert S_0\vert  - 1$                                             & $1-\frac{\mu}{L}\frac{\tau_p^2}{4}$ & $\left(1-\frac{\mu}{L}\frac{\tau_p^2}{4}\right)^{\frac{k}{2}}$ & - \\ \hline
			PFW                           & \cite{lacoste2015global}       & SC        & $k/(3\vert A\vert ! + 1)$                  & -                                                  & $1-\frac{\mu}{L}\tau_p^2$ & $\left(1-\frac{\mu}{L}\tau_p^2\right)^{\frac{k}{3\vert A\vert ! + 1}}$ & - \\ \hline
			FDFW\footnotemark                       & \cite{kolmogorov2020practical}  & SC        & $k/(\Delta(\OM) + 1)$           & $\tx{dim}(\F(x_0))$               &$1-\frac{\mu}{L}\frac{\tau_v^2}{4}$  &$\left( 1-\frac{\mu}{L}\frac{\tau_v^2}{4}\right)^{\frac{k}{\Delta(\OM) + 1}}$ & -  \\ \hline
			AFW + SSC                         & Ours                            & NC, KL    & $k$                            & -                                                  & $\left( 1 + \frac{\mu}{L}\frac{\tau_p^2}{(2 + \tau_p)^2}\right)^{- 1}$ & $\left( 1 + \frac{\mu}{L}\frac{\tau_p^2}{(2 + \tau_p)^2}\right)^{- k}$ & 2 \\ \hline
			PFW + SSC                         & Ours                            & NC, KL    & $k$                            & -                                                  &$ \left( 1 + \frac{\mu}{L}\frac{\tau_p^2}{(1 + \tau_p)^2}\right)^{- 1} $& $ \left( 1 + \frac{\mu}{L}\frac{\tau_p^2}{(1 + \tau_p)^2}\right)^{- k} $ & 2 \\ \hline
			FDFW + SSC                       & Ours                            & NC, KL    & $k$                           & -                                                  &  $\left( 1 + \frac{\mu}{L}\frac{\tau_v^2}{(1 + \tau_v)^2}\right)^{- 1}$&  $\left( 1 + \frac{\mu}{L}\frac{\tau_v^2}{(1 + \tau_v)^2}\right)^{- k}$ & $\Delta(\OM) + 1$ \\   \hline                                
	\end{tabular}}
	\egroup
	\caption{Comparison between the rates of the standard and SSC version of some FW variants for $\OM = \conv(A)$ with $\vert A\vert  < \infty$. SC = strongly convex, NC = non convex, KL = KL property.  $\gamma(k)$:~lower bound on the number of good steps after $k$ steps, counting from the first good step. $I_b$: bound on the number of bad steps before the first good step. $\bqfs$: rate in good steps. $h_k/h_0 $ upper bound: worst case rate assuming no initial bad steps, equal to $\bqfs^{\gamma(k)}$.    $\Delta(\OM) = $ maximum increase in face dimension $\F(x_{k + 1}) - \F(x_k)$ after a FW step. $S_0 = $ active set for $x_0$. $T_{avg} = $ worst case average iteration number of the SSC (see Proposition \ref{p:SSCworstaverage})}
	\label{tab:2}
\end{table}

Similarly to what we did for Theorem \ref{cor:stationary}, here we give a corollary for Lemma \ref{p:crkl} related to the FW variants described in Section \ref{s:FWv}. 
\begin{corollary} \label{cor:klrate}
	Let us assume that the objective function $f$  satisfies condition~\eqref{hp:klP} on every point generated by the algorithm, with $f(x^*)$ fixed, and that $\Omega = \tx{conv}(A)$ with $\vert A\vert  < +\infty $ in Problem \eqref{eq:mainpb}.
	Then the sequence $\{x_k\}$  generated by  Algorithm \ref{alg:3} with AFW (PFW or  FDFW) in the SSC
	converges at the rates given by Lemma \ref{p:crkl}, with $\tau = \tau_p/2$ ($\tau_p$ or $\tau_v/2$, respectively).
\end{corollary}

\begin{proof}
	Finite termination of the SSC follows by Proposition \ref{p:SSCworst}, and the angle condition is satisfied by Proposition \ref{eqphi}. Thus we have all the assumptions to apply Lemma \ref{p:crkl}.
\end{proof}

For comparison, we now recall some well-known result related to	global linear convergence rates for the FW variants under analysis. 


\begin{proposition} \label{p:sc.wcr}
	Let us assume that the objective function $f$ is $\mu - $strongly convex and $\Omega = \tx{conv}(A)$ with $\vert A\vert  < +\infty $ in Problem \eqref{eq:mainpb}. Let $\{x_k\}$ be a sequence generated by the AFW (PFW or FDFW), with stepsize given by exact linesearch. If the initial active set is $S_0 = \{x_0\}$ for the AFW ($S_0 = \{x_0\}$ for the PFW, $\dim(\F(x_0)) = 0$ for the FDFW), then 
	\begin{equation}
		f(x_k) - f^* \leq \bqfs^{\gamma(k)}(f(x_0) - f^*) \, , 
	\end{equation}
	for $\gamma(k)$ and $\bqfs$ given in Table \ref{tab:2}. 
\end{proposition}
\begin{proof}
	For the AFW and the PFW the result follows directly from \cite[Theorem 1]{lacoste2015global}, with the exception of the good steps rate for the PFW, which can be obtained by applying the bound \cite[Equation 10]{lacoste2015global} in \cite[Equation 5]{lacoste2015global}. For the FDFW the result follows from \cite[Theorem 1]{kolmogorov2020practical} (where the method is referred to as DiCG), with the bound $\mu \PWidth(V(\OM)^2$ on the geometric strong convexity constant implied by \cite[Theorem 6]{lacoste2015global} improved to $\mu \PFWidth(\OM)^2$ as in Proposition \ref{eqphi}.  
\end{proof}

For all the examples where an upper bound on $\tau_p = \frac{\tx{PWidth}(A)}{D}$ is known (see  \cite{rademacher2020smoothed}, \cite{pena2018polytope} and references therein) when $\tx{dim}(\tx{conv}(A)) \rightarrow \infty$ then $\tau_p \rightarrow 0$ and our rates for the SSC converge to the rates without SSC for good steps in Table~\ref{tab:2}. While we are not able to prove this limit in general,  for all polytopes with dimension greater or equal to 2, except low dimensional simplices (see Example~\ref{ex:1}), we still have $\tau_p \leq \frac{1}{2}$ (because $\tx{PdirW}(A, g, x) + \tx{PdirW}(A, -g, x) \leq D$ for $x$ in the relative interior of $\conv(A)$ and $\pm g$ feasible and orthogonal to $\conv(S)$ for some $S \in S_x$). Using this together with Example~ \ref{ex:1} for simplices, it is easy to check that the rates in Corollary \ref{cor:klrate} (SSC based FW variants) are strict improvements on the known worst case rates (standard FW variants) reported in Proposition~\ref{p:sc.wcr}, with a limited number of exceptions. These are the trivial one dimensional case and simplices with low dimension ($\leq 4$ for the PFW, and $\leq 8$ for the AFW using the loose bounds in Example \ref{ex:1}) combined with objectives having condition number $\mu/ L $ sufficiently close to 1.

\begin{example} \label{ex:1}
	If $W(\conv(A))$ is the width of $\conv(A)$ (see \cite[Section~3]{lacoste2015global}) then it follows directly from the definition of $\PWidth$ that $W(\conv(A)) \geq \PWidth(A) $, with equality for $A = \{e_1, ... , e_n\}$ (see \cite{lacoste2015global} and \cite{pena2018polytope}). Let now $A = \{a_1, ..., a_n\}$ be a set of $n$ affinely independent points in $\RR^{n - 1}$. We claim that, for $r_n = \sqrt{1 - \frac{1}{n}}$ circumradius of the $n-1$ dimensional unit simplex $\Delta_{n-1}$ 
	\begin{equation}
		\PWidth(A)/D \leq r_n^{-1}W(\Delta_{n-1})  = \begin{cases}
			2r_n^{-1}\sqrt{\frac{1}{n}} &\tx{ for } n \tx{ even}, \\
			2r_n^{-1}\sqrt{\frac{1}{n - 1/n}}  &\tx{ for } n \tx{ odd}.
		\end{cases}
	\end{equation}
	To see this, assume without loss of generality $D = 1$ and $0 \in \tx{int}(\OM)$ for $ \OM = \conv(A)$. Then if 
	$\hat{A} = \{\hat{a}_1, ..., \hat{a}_n\}$ we have $W(\conv(\hat{A})) \geq W(\conv(A))$. We can conclude
	\begin{equation}
		\frac{\PWidth(A)}{D} = \PWidth(A) \leq W(\conv(A)) \leq W(\conv(\hat{A})) \leq r_n^{-1} W(\Delta_{n-1})  \, ,
	\end{equation} 
	where in the last inequality we used that regular simplices maximize the width among simplices with fixed inradius  (see, e.g., \cite{alexander1977width} and \cite{gritzmann1989estimates}).
\end{example}

\section{Examples} \label{s:examples}

We now discuss some examples of objectives satisfying the KL property and sets where the angle condition can be satisfied with an explicit bound, relevant to practical optimization problems. 
\subsection{KL property}
The KL property of Assumption \ref{ass:KL} is satisfied for Problem \eqref{eq:mainpb} in the following cases:
\begin{itemize}
	\item $f$ is composite strongly convex, i.e. $f(x) = g(Bx)$ with $g$ strongly convex, and $\OM$ is a polytope \cite[Proposition 4.1]{li2018calculus},
	\item $f$ is composite strongly convex as in the previous point, $\OM$ is the $l^p$ ball for $p \in [1, 2]$, and $\inf_{x \in \OM} f(x) > \inf_{x \in \RR^n} g(Bx)$ \cite[Proposition 4.2]{li2018calculus},
	\item $f$ is (non convex) quadratic, i.e. $f(x) = x^{\top}Qx + b^{\top}x + c$, and $\OM$ is a polytope,  \cite[Corollary 5.2]{li2018calculus},
	\item $f$ is non convex quadratic and does not satisfy the degeneracy condition of \cite[equation (30)]{jiang2022holderian}, and $\OM$ is the unit sphere \cite[Theorem 3.13]{jiang2022holderian}.
\end{itemize}
\subsection{Angle condition bounds}
\subsubsection{Bounds using $\PWidth$}
For the unit simplex and the unit cube explicit $\Theta(1/\sqrt{n})$ values were given in \cite[Example 1 and 2]{pena2018polytope}. With analogous arguments it can be proved that the $\PWidth$ of the $l_1$ ball is $1/\sqrt{n}$. By Proposition \eqref{eqphi}, this implies that the angle condition can be lower bounded with $\tau = \Theta(1/\sqrt{n})$ for the unit simplex and the $l_1$ ball, and with $\tau = \Theta(1/n)$ for the unit cube.  
\subsubsection{Bounds using facial distance $\vf$}
For a polytope $\OM = \{ x \in \RR^n \ \vert \ Ax \leq b \}$ with $A \in \RR^{m \times n}$ the facial distance can be defined as (see \cite{beck2017linearly}):
\begin{equation} \label{eq:vfdef}
	\vf(\OM) = \min_{\substack{v \in V(\OM) \\ i:\Sc{a^{(i)}}{v} < b_i}  } \frac{b_i - \Sc{a^{(i)}}{v}}{\n{a^{(i)}}} \, .
\end{equation} 
It is the easy to bound $\vf(\OM)$ on some specific class of polytopes and, consequently, give an explicit bound for the angle condition (see also \cite{bashiri2017decomposition}). For instance, if the matrix $A$ is totally unimodular (i. e. all the vertices are integral for $b$ integral), we have the following properties.
\begin{proposition} \label{p:unimodular}
	 If the matrix $A$ is totally unimodular and $b$ is integral, then for $\bar{a} = \max_{i \in [1:m]} \n{a_i}$:
	 \begin{itemize}
	 	\item for the AFW or the PFW, if the size of the active set stays bounded by $\bar{s}$, then 
	 	\begin{equation}
	 		\SB_{\tx{AFW}}(\OM) \geq \frac{1}{2\bar{s}\bar{a}D}, \quad \SB_{\tx{PFW}}(\OM) \geq \frac{1}{\bar{s}\bar{a}D};
	 	\end{equation}
 	   \item for the FDFW, 
 	   \begin{equation} \label{eq:vfFD}
 	   	\SB_{\tx{FD}}(\OM) \geq \frac{1}{2D\bar{a}(\dim(\OM) + 1)} \geq \frac{1}{2D\bar{a}(n + 1)}.
 	   \end{equation}
	 \end{itemize} 
\end{proposition}
\begin{proof}
	If $A$ is totally unimodular then for $i \in [1:m], v \in V$ such that $b_i - \Sc{a^{(i)}}{v} > 0$ we have
\begin{equation} \label{eq:unitriv}
\frac{b_i - \Sc{a^{(i)}}{v}}{\n{a_i}} \geq \frac{1}{\n{a_i}}
\end{equation}
since the numerator on the LHS must be at least one. By applying \eqref{eq:unitriv} to the RHS of \eqref{eq:vfdef} we obtain
\begin{equation}
	\vf(\OM) \geq \min_{i \in [1:m]} \frac{1}{\n{a_i}} = \frac{1}{\bar{a}} \, .
\end{equation}
Then the thesis follows for the AFW and the PFW directly from the bounds of Remark \ref{r:1}. For the FDFW, the second part of \eqref{eq:vfFD} is trivially true since $\dim(\OM) \leq n$, and the first follows by the bound given in Remark \ref{r:1}, using that by the Caratheodory theorem for every feasible point $x$ there exists $S \in S_x$ with $\vert S \vert \leq \dim(\OM) + 1$.
\end{proof}
The bound of Proposition \ref{p:unimodular} allows us to bound the angle condition for the min cost flow polytope with integral capacities:
\begin{equation} \label{eq:mincost}
	\OM = \{x \in \RR^n \ \vert \ Ax \leq b, \ 0 \leq x \leq c \} \, ,
\end{equation}
with $b, c$ integral and $A$ incidence matrix of a directed graph $G$.
\begin{corollary}
Consider a directed graph $G$ with incidence matrix $A \in \RR^{m \times n}$ and maximum degree of a vertex $d$. Then if $\OM$ is given as in \eqref{eq:mincost}:
\begin{equation}
	\SB_{\tx{FD}}(\OM) \geq \frac{1}{2\sqrt{d}(n + 1)\n{c}}
\end{equation}
\end{corollary}
\begin{proof}
	By the capacity constraints, the diameter of $\OM$ is at most $\n{c}$. Then
		the result follows easily from Proposition \ref{p:unimodular} by noticing that $\OM$ can be rewritten as $\{x \in \RR^n \ \vert \ \bar{A}x \leq b\}$ for $\bar{A} = (A; I; -I)$ totally unimodular (see, e.g., \cite{truemper1977unimodular}) with maximum norm of a row equal to $\sqrt{d}$. 
\end{proof}
\subsubsection{Bounds on sets with smooth boundary}
On convex sets with smooth boundary the angle condition can be satisfied with constant arbitrarily close to 1 using orthographic retractions \cite[Section 6.3]{rinaldi2020unifying}. Furthermore, on sublevel sets of smooth and strongly convex functions the FDFW satisfies the angle condition with constant equal to the condition number of the function divided by 2 \cite[Section 6.2]{rinaldi2020unifying}.

\subsection{Applications}
There is a number of practical optimization problems with the feasible sets and objectives discussed above. To start with, the LASSO problem, the minimum enclosing ball problem, training linear support vector machines and finding maximal cliques in graphs can all be formulated as convex quadratic optimization problems \cite{bomze2021frank} on the $l_1$ ball or the simplex. The trust region subproblem is a non convex quadratic problem on the unit sphere (see \cite{jiang2022holderian}). The min cost flow problem with a quadratic objective is also of practical interest \cite{tamir1993strongly}. Many other examples can be found in \cite{li2018calculus}.

\section{Numerical tests} \label{REST}
We tested the SSC on the AFW and the PFW methods, applied to a quadratic (non convex) relaxation of the maximum clique problem proposed in \cite{bomze1997evolution}. \\ 
More precisely, let $A$ be the adjacency matrix of a graph $G$. In \cite{bomze1997evolution} it is proved that there is a one to one correspondence between the maximal cliques of $G$ and the local minima of the function $f:\Delta_{n-1} \rightarrow \RR$ defined by
\begin{equation} \label{eq:mcobj}
	f(x) = - x^\intercal A x - \frac{1}{2} \|x\|^2.
\end{equation}
Therefore, we consider instances of Problem \eqref{eq:mainpb} with objective \eqref{eq:mcobj} and feasible set the $n-1$ dimensional unit simplex, that is $\OM = \Delta_{n-1}$. \\
The graph instances we use are taken from the DIMACS benchmark \cite{johnson1993cliques}. 
To have a fair comparison for both the AFW and the PFW we use the stepsize given by 
\begin{equation}
	\alpha_k = \min \{\alpha^{\max}_k, -\frac{\Sc{\nabla f(x_k)}{d_k}}{L \n{d_k}^2} \}
\end{equation}
with $\alpha_k^{\max}$ determined by boundary conditions. In this way the new point computed by the methods coincides with the first point computed in the SSC procedure of their multistep versions. \\ 
We reported in Table \ref{tab:7}, \ref{tab:8} the results for the most challenging instances, aggregated on 100 runs starting from random points. The SSC clearly improves the CPU times while keeping the solution quality. Indeed in these problems the SSC allows the methods to identify the support of a local minimum in fewer iterations, so that the slow initial convergence phase is skipped (see Figures \ref{fig:image3}, \ref{fig:image2}).  
\setcounter{table}{2}
\captionsetup[table]{name=Table, labelfont=bf}
\begin{table} 
	\caption{Max clique found, average clique size, standard deviation of clique sizes and average CPU time for AFW and SSC + AFW on max clique instances from the DIMACS benchmark.}
	\begin{tabular}{|l|llll|llll|} \hline
		\multicolumn{1}{|c}{}         & \multicolumn{4}{|c|}{\textbf{AFW}}                                                                                     & \multicolumn{4}{|c|}{\textbf{SSC + AFW}}                                                                                 \\ \hline
		\multicolumn{1}{|c}{Instance} & \multicolumn{1}{|c}{Max} & \multicolumn{1}{c}{Mean} & \multicolumn{1}{c}{Std} & \multicolumn{1}{c}{CPU time} & \multicolumn{1}{|c}{Max} & \multicolumn{1}{c}{Mean} & \multicolumn{1}{c}{Std} & \multicolumn{1}{c|}{CPU time} \\ \hline
		C2000.5          & 14   & 11.7   & 0.89 & 2.800    & 14   & 11.6   & 1.00 & 0.082    \\
		C2000.9          & 67   & 60.2   & 2.20 & 3.135    & 65   & 60.0   & 2.05 & 0.200    \\
		C4000.5          & 16   & 12.8   & 0.94 & 23.487   & 16   & 12.5   & 0.92 & 0.429    \\
		MANN\_a81        & 1080 & 1080.0 & 0.00 & 31.156   & 1080 & 1080.0 & 0.00 & 25.047   \\
		keller6          & 45   & 38.4   & 2.41 & 13.713   & 43   & 37.8   & 2.22 & 0.413    \\
		
		\hline 
	\end{tabular}
	\label{tab:7}
\end{table}
\begin{table} 
	\caption{Max clique found, average clique size, standard deviation of clique sizes and average CPU time for PFW and SSC + PFW on max clique instances from the DIMACS benchmark.}
	\begin{tabular}{|l|llll|llll|} \hline
		\multicolumn{1}{|c}{}         & \multicolumn{4}{|c|}{\textbf{PFW}}                                                                                     & \multicolumn{4}{|c|}{\textbf{SSC + PFW}}                                                                                 \\ \hline
		\multicolumn{1}{|c}{Instance} & \multicolumn{1}{|c}{Max} & \multicolumn{1}{c}{Mean} & \multicolumn{1}{c}{Std} & \multicolumn{1}{c}{CPU time} & \multicolumn{1}{|c}{Max} & \multicolumn{1}{c}{Mean} & \multicolumn{1}{c}{Std} & \multicolumn{1}{c|}{CPU time} \\ \hline
		C2000.5          & 14   & 11.8  & 0.86 & 2.811    & 14   & 12.1  & 0.86 & 0.077    \\
		C2000.9          & 67   & 62.3  & 1.83 & 3.031    & 68   & 62.0  & 1.77 & 0.150    \\
		C4000.5          & 15   & 12.7  & 0.92 & 23.423   & 16   & 13.4  & 0.95 & 0.379    \\
		MANN\_a81        & 1080 & 1080.0  & 0.00    & 19.867   & 1080 & 1080.0  & 0.00    & 15.442   \\
		keller6          & 44   & 37.3  & 2.68 & 13.515   & 45   & 35.6  & 2.83 & 0.258    \\
		\hline 
	\end{tabular}
	\label{tab:8}
\end{table}
\clearpage

\begin{figure}[h]
	\centering
	
	\begin{subfigure}{0.4\textwidth}
		\includegraphics[width=0.9\linewidth, height=4cm]{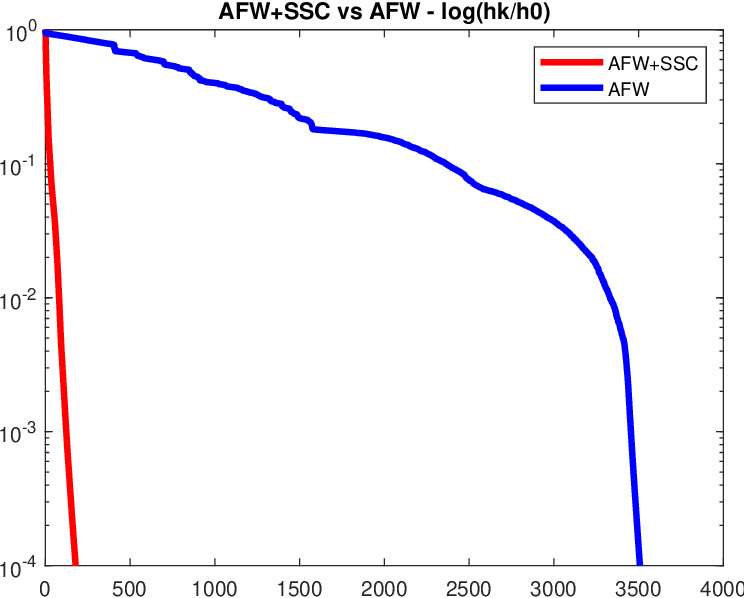} 
	\end{subfigure}
	\begin{subfigure}{0.4\textwidth}
		\includegraphics[width=0.9\linewidth, height=4cm]{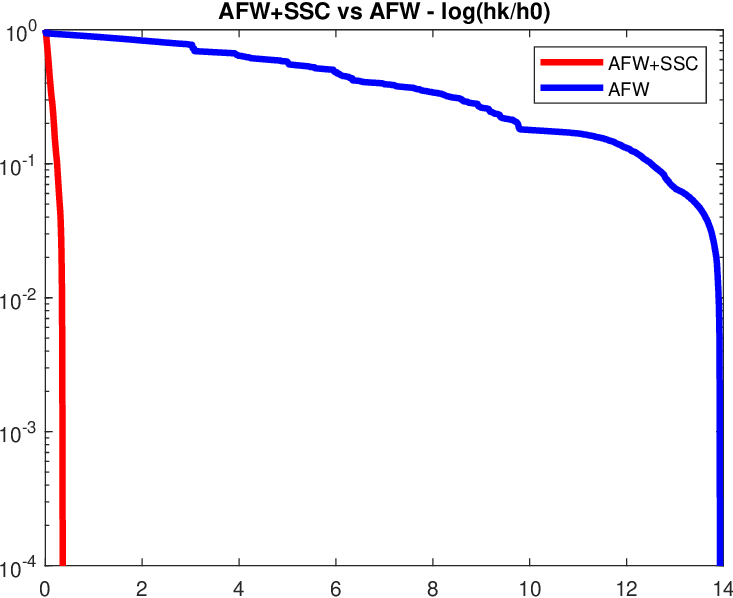}
	\end{subfigure}
	
	\begin{subfigure}{0.4\textwidth}
		\includegraphics[width=0.9\linewidth, height=4cm]{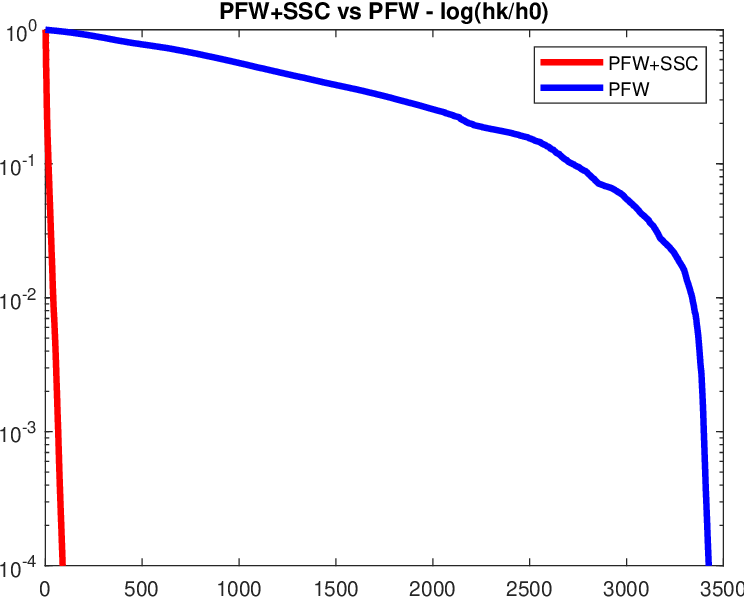} 
	\end{subfigure}
	\begin{subfigure}{0.4\textwidth}
		\includegraphics[width=0.9\linewidth, height=4cm]{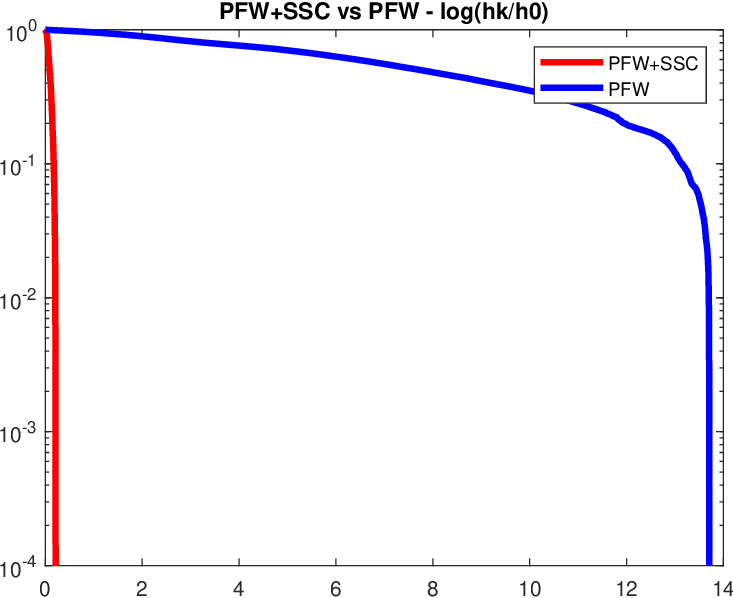}
	\end{subfigure}
	\caption{Iteration number and CPU time vs $\log(h_k/h_0)$ in the first and the second column respectively for the instance keller6}
	\label{fig:image3}
\end{figure}

\begin{figure}[h]
	\centering
		
	\begin{subfigure}{0.4\textwidth}
	\includegraphics[width=0.9\linewidth, height=4cm]{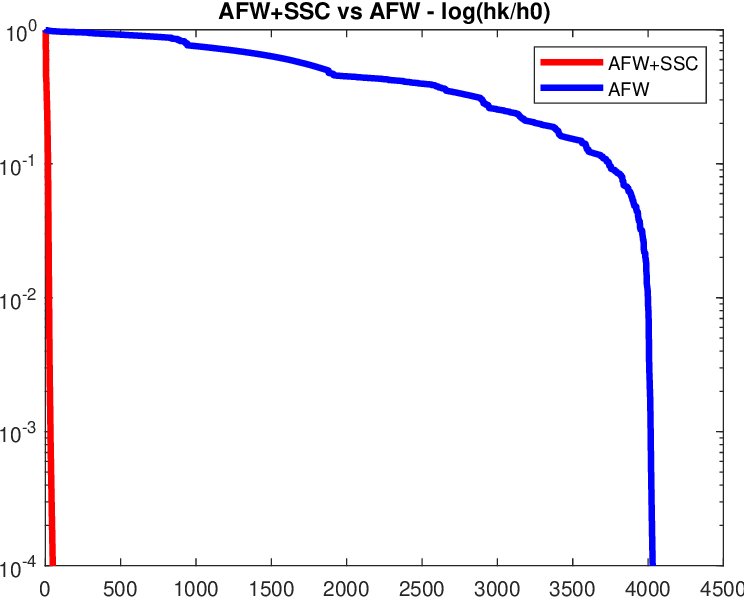} 
\end{subfigure}
\begin{subfigure}{0.4\textwidth}
	\includegraphics[width=0.9\linewidth, height=4cm]{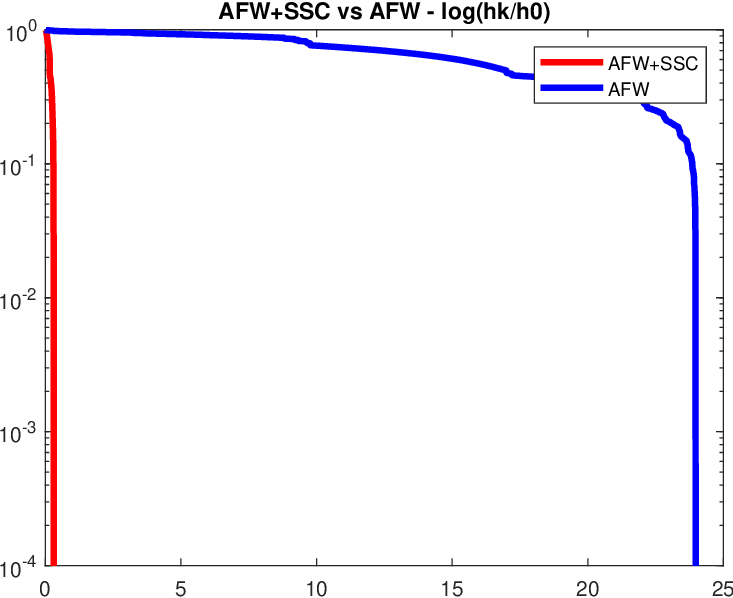}
\end{subfigure}

\begin{subfigure}{0.4\textwidth}
	\includegraphics[width=0.9\linewidth, height=4cm]{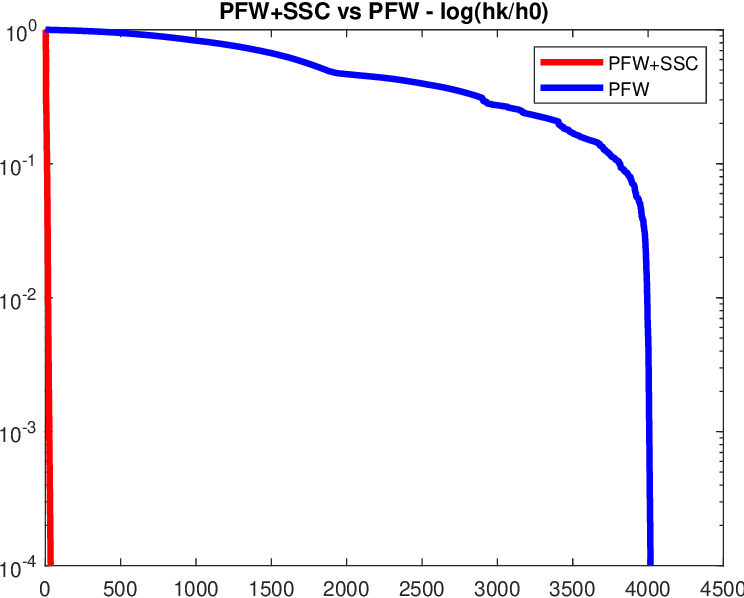} 
\end{subfigure}
\begin{subfigure}{0.4\textwidth}
	\includegraphics[width=0.9\linewidth, height=4cm]{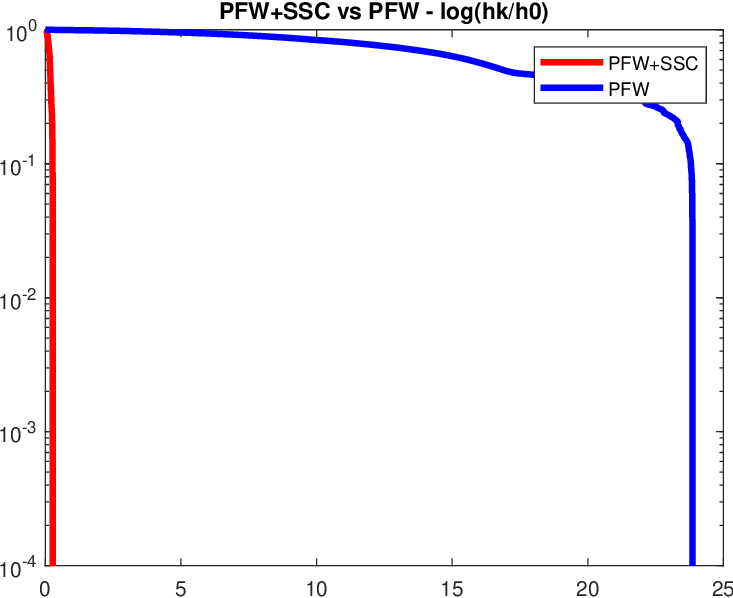}
\end{subfigure}
	\caption{Iteration number and CPU time vs $\log(h_k/h_0)$ in the first and the second column respectively for the instance C4000.5}
	\label{fig:image2}
\end{figure}

\section{Conclusions} 	
FW variants rely on the choice of good feasible descent directions, for which there needs to be a trade-off between slope and maximal stepsize. To address this issue we proposed the SSC procedure, which allowed us to prove bad step free convergence rates under an angle condition for the directions selected by the method. Preliminary numerical experiments also support the soundness of this approach. 

Future research directions include employing our framework to design and analyze other projection free first order methods, investigating active set identification properties of FW variants with the SSC, generalizing our framework to constrained stochastic optimization, as well as applications for the solution of real-world data science problems. 
\section{Appendix}	
\subsection{KL property} \label{s:KLPL}

We state here a result showing an implication between the (global) PL property used in \cite{karimi2016linear} and \eqref{hp:klP}. We first recall the PL property used in \cite{karimi2016linear}:
\begin{equation} \label{eq:PLp}
	\frac{1}{2}\n{\nabla f(x)}^2 \geq \mu(f(x) - f^*) \, .  
\end{equation}
with $f^*$ optimal value of $f$ with non empty solution set $\mathcal{X}^*$. 
\begin{proposition} \label{p:PLiKL}
	If $f$ is convex, the optimal solution set $\mathcal{X}^*$ of $f$ is contained in $\OM$ and \eqref{eq:PLp} holds, then \eqref{hp:klP} holds for every $x \in \OM$.
\end{proposition}
\begin{proof}
	By \cite[Theorem 2]{karimi2016linear} the PL property is equivalent, for convex objectives, to the unconstrained quadratic growth condition:
	\begin{equation} \label{eq:QG}
		f(x) - f^* \geq \frac{\mu}{2}\dist(x, \mathcal{X}^*)^2
	\end{equation}
	In turn, given that by the assumption $\mathcal{X}^* \subset \OM$ the set $\mathcal{X}^*$ is the solution set for $f_{\OM}$ as well, \eqref{eq:QG} implies the global non smooth Holderian error bound condition from \cite{bolte2017error} with $\varphi(t) = \sqrt{\frac{2t}{\mu}} $, and by \cite[Corollary 6]{bolte2017error} this is equivalent to the KL property \eqref{hp:klP} holding globally on $\OM$.
\end{proof}
\begin{remark} \label{r:counterexample}
	We remark that without the assumption $\mathcal{X}^* \subset \OM$ the implication is no longer true even for convex objectives, a counter example being $\OM$ equal to the unitary ball and $f((x^{(1)}, ..., x^{(n)})) = (x^{(1)} - 1)^2$. At the same time, the KL property we used does not imply the PL property in general, since the latter only deals with unconstrained minima. 
\end{remark}

\subsection{Proofs}
We report here the missing proofs. We start with the proof of Lemma \ref{l:suffdec}.
\begin{proof}
			By the standard descent lemma \cite[Proposition 6.1.2]{bertsekas2015convex}, 
	\begin{equation} \label{eq:stdc}
		f(x_{k + 1}) = f(x_k + \alpha_k d_k) \leq f(x_k) + \alpha_k \s{\nabla f(x_k)}{d_k} + \alpha_k^2 \frac{L}{2}\n{d_k}^2 \, ,
	\end{equation}
	and in particular
	\begin{equation} \label{eq:sdec}
		f(x_{k}) - f(x_{k+1}) \geq - \alpha_k \s{\nabla f(x_k)}{d_k} - \alpha_k^2 \frac{L}{2}\n{d_k}^2 \geq \frac{L}{2}\alpha_k^2\n{d_k}^2 = \frac{L}{2}\n{x_{k + 1} - x_k}^2 \, ,
	\end{equation}
	where we used $\alpha_k \leq \bar{\alpha}_k$ in the last inequality. This proves \eqref{eq:suffdec}.
\end{proof}
 We now state a preliminary result needed to prove Proposition~\ref{NWessential}:
\begin{proposition} \label{ddual}
	Let $C$ be a closed convex cone. For every $y \in \mathbb{R}^n$ 
	\begin{equation*}
		\textnormal{dist}(C^*, y) = \sup_{c \in C} \s{\hat{c}}{y} \, .
	\end{equation*}
\end{proposition}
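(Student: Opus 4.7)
My plan is to reduce the statement to the Moreau decomposition for closed convex cones, which gives $y = \pi(C, y) + \pi(C^*, y)$ with $\pi(C, y) \perp \pi(C^*, y)$. From this decomposition, I immediately get $\mathrm{dist}(C^*, y) = \|y - \pi(C^*, y)\| = \|\pi(C, y)\|$, so the content of the proposition is the identity $\|\pi(C, y)\| = \sup_{c \in C} \langle \hat{c}, y \rangle$.

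The first step is to dispose of the trivial case $y \in C^*$. Here $\pi(C, y) = 0$, and since $\langle c, y \rangle \le 0$ for all $c \in C$, the supremum is $\le 0$; taking $c = 0$ (so $\hat{c} = 0$) shows the supremum equals $0$, matching the distance.

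For the main case $y \notin C^*$, set $p = \pi(C, y)$ and $q = \pi(C^*, y)$, so $y = p + q$ with $\langle p, q\rangle = 0$. For the lower bound, plug $c = p \in C$ into the supremum and compute
\begin{equation*}
\langle \hat{p}, y \rangle = \frac{\langle p, p + q\rangle}{\|p\|} = \frac{\|p\|^2}{\|p\|} = \|p\|,
\end{equation*}
so $\sup_{c \in C} \langle \hat{c}, y \rangle \ge \|p\| = \mathrm{dist}(C^*, y)$.

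For the upper bound, for any nonzero $c \in C$ I split $\langle \hat{c}, y \rangle = \langle \hat{c}, p\rangle + \langle \hat{c}, q\rangle$. The second term is nonpositive because $c \in C$ and $q \in C^*$, and the first term is at most $\|p\|$ by Cauchy--Schwarz. Combining the two bounds gives the equality. I don't foresee a real obstacle here; the only point requiring a little care is the convention $\hat{0} = 0$ and making sure the supremum is handled correctly on the edge case $y \in C^*$, which is why I treat it separately. The Moreau decomposition itself is a standard fact from \cite{rockafellar2009variational} that can be invoked directly.
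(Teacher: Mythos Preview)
Your proof is correct and follows exactly the approach indicated in the paper: the paper simply states that the result is an immediate consequence of the Moreau--Yosida decomposition $y = \pi(C, y) + \pi(C^*, y)$ and cites \cite{burke1988identification}, while you supply the short explicit computation (the two inequalities via $c = \pi(C, y)$ and Cauchy--Schwarz plus $\langle c, q\rangle \le 0$) that the paper leaves implicit.
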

As stated in \cite{burke1988identification} this is an immediate consequence of the Moreau-Yosida decomposition: 
$$ y = \pi(C, y) + \pi(C^*, y) \, . $$ 
\begin{proof}[Proposition \ref{NWessential}.]
	First, by continuity of the scalar product we have
	\begin{equation} \label{omtomeq}
		\sup_{h\in\Omega/\{\bar{x}\}} \left (g, \frac{h-\bar{x}}{\|h - \bar{x}\| }\right ) = \sup_{h\in T_{\Omega}(\bar{x}) \sm \{0\}} (g, \hat{h}) \, .
	\end{equation}
	Since $N_{\Omega}(\bar{x}) = T_{\Omega}(\bar{x})^*$ the first equality is exactly the one of Proposition \ref{ddual} if $g \notin N_{\Omega}(\bar{x})$, and it is trivial since both terms are clearly $0$ if $g \in N_{\Omega}(\bar{x})$.\\
	It remains to prove 
	\begin{equation*}
		\dist(N_{\Omega}(\bar{x}), g)= \|\pi(T_{\Omega}(\bar{x}), g)\| \, ,	
	\end{equation*}
	which is true by the Moreau - Yosida decomposition. 
\end{proof}

\begin{proof}[Proposition \ref{keyprop}.]
	Let $B_j = \BB_{\s{g}{\hat{d}_j}/L}(x_k)$ and let $T$ be such that $x_{k +  1} = y_T$.  \\ 
	Inequality \eqref{eq:dcondy} applied with $j = T$ gives \eqref{dq}. Moreover, by taking $\tilde{x}_k = y_{\tilde{T}}$ for some $\tilde{T} \in [0:T]$ the conditions 
	\begin{equation} \label{xktildecond}
		f(x_{k+1}) \leq f(\tilde{x}_k) \leq f(x_k) - \frac{L}{2} \n{x_k - \tilde{x}_k}^2 
	\end{equation}
	are satisfied by Lemma \ref{l:decreasing} and \eqref{eq:dcondy}. \\
	Let now $p_j = \n{\pi(T_{\OM}(y_j), -\nabla f(y_j))}$ and $\tilde{p}_j = \n{\pi(T_{\OM}(y_j), g)} = \n{\pi(T_{\OM}(y_j), -\nabla f(x_k))}$. We have
	\begin{equation} \label{projineq}
		\begin{aligned}
			\vert p_j - \tilde{p}_j \vert  \leq L\n{y_j-x_k} \, ,
		\end{aligned}		
	\end{equation}
	reasoning as for \eqref{eq:projineq}. We now distinguish four cases according to how the SSC terminates. \\
	\textbf{Case 1:} $T = 0$ or $d_T = 0$. Since there are no descent directions $x_{k +  1} = y_T$ must be stationary for the gradient $g$.  Equivalently, $\tilde{p}_T = \n{\pi(T_{\OM}(x_{k +  1}), g)} = 0$. We can now write
	\begin{equation*}
		\n{x_{k +  1}-x_k} \geq \frac{1}{L}( \vert p_T - \tilde{p}_T \vert ) = \frac{p_T}{L} > \K p_T \, ,
	\end{equation*}
	where we used \eqref{projineq} in the first inequality and $\tilde{p}_T = 0$ in the equality. Finally, it is clear that if $T = 0$ then $d_0 =0$, since $y_0$ must be stationary for $-g$.  \\
	Before examining the remaining cases we remark that if the SSC terminates in Phase II then $\alpha_{T- 1} = \beta_{T-1}$ must be maximal w.r.t. the conditions $y_T \in B_{T-1}$ or $y_T \in \bar{B}$. If $\alpha_{T-1} = 0$ then $y_{T-1} = y_T$, and in this case we cannot have $y_{T-1} \in \partial \bar{B}$, otherwise the SSC would terminate in Phase II of the previous cycle. Therefore necessarily $y_T = y_{T-1} \in \tx{int}(B_{T-1})^c$ (Case 2). If $\beta_{T - 1} = \alpha_{T- 1} > 0$ we must have $y_{T-1}\in \OM_{T-1} = B_{T-1} \cap \bar{B}$, and $y_T \in \partial B_{T - 1}$ (case 3) or $y_T \in \partial \bar{B}$ (case 4) respectively.  \\ 
	\textbf{Case 2:} $y_{T-1} = y_T \in \tx{int}(B_{T-1})^c$. We can rewrite the condition as
	\begin{equation} \label{trueT}
		\s{g}{\hat{d}_{T-1}} \leq L\n{y_{T-1} - x_k} = L \n{y_T - x_k} \, .
	\end{equation}
	Thus 
	\begin{equation} \label{t2T}
		p_T = p_{T-1} \leq \tilde{p}_{T-1} + L\n{y_{T} - x_k} \leq \frac{1}{\tau}\s{g}{\hat{d}_{T-1}} + L\n{y_T - x_k} \leq \left(\frac{L}{\tau} + L\right) \n{y_T - x_k} \, ,
	\end{equation}
	where in the equality we used $y_{T} = y_{T-1}$, the first inequality follows from \eqref{projineq} and again $y_T = y_{T-1}$, the second from $\frac{\s{g}{\hat{d}_T}}{\tilde{p}_T} \geq \DSB_{\mathcal{A}}(\OM, y_{T}, g) \geq \SB_{\mathcal{A}}(\OM) = \tau$, and the third from \eqref{trueT}. Then $\tilde{x}_k = x_{k + 1} = y_T$ satisfies the desired conditions. \\ 		
	\textbf{Case 3:} $y_T = y_{T - 1} + \beta_{T - 1} d_{T-1}$ and $y_T \in \partial B_{T-1}$. Then from $y_{T-1} \in B_{T-1}$ it follows
	\begin{equation} \label{c3y}
		L \n{y_{T-1} - x_k} \leq \s{g}{\hat{d}_{T-1}} \, ,
	\end{equation} 
	and $y_T \in \partial B_{T-1}$ implies
	\begin{equation}\label{t1T}
		\s{g}{\hat{d}_{T-1}} = L \n{y_T - x_k} \, .
	\end{equation}
	Combining \eqref{c3y} with \eqref{t1T} we obtain 
	\begin{equation} \label{yt1}
		L \n{y_{T - 1} - x_k} \leq L \n{y_T - x_k} \, .
	\end{equation}
	Thus 
	\begin{equation*}
		p_{T - 1} \leq \tilde{p}_{T - 1} + L\n{y_{T - 1} - x_k} \leq \frac{1}{\tau}\s{g}{\hat{d}_{T - 1}} + L\n{y_{T - 1} - x_k} \leq \left(\frac{L}{\tau} + L\right) \n{y_T - x_k}	\, ,	
	\end{equation*}
	where we used \eqref{t1T}, \eqref{yt1} in the last inequality and the rest follows reasoning as for \eqref{t2T}. In particular we can take $\tilde{x}_k = y_{T-1}$, where $\n{\tilde{x}_k - x_k} \leq \n{x_{k + 1} - x_k}$ by \eqref{yt1}. \\
	\textbf{Case 4:} $y_T = y_{T - 1} + \beta_{T - 1} d_{T-1}$ and $y_T \in \partial \bar{B}$. \\
	The condition $x_{k +  1} = y_T \in \bar{B}$ can be rewritten as
	\begin{equation} \label{case2c}
		L\n{x_{k +  1} - x_k}^2 - \s{g}{x_{k +  1} - x_k} = 0 \, .
	\end{equation}
	For every $j \in [0:T]$ we have
	\begin{equation} \label{eq:rec}
		x_{k +  1} = y_j + \sum_{i=j}^{T-1} \alpha_i d_i \, .
	\end{equation}
	We now want to prove that for every $j \in [0:T]$ 
	\begin{equation} \label{eq:claim2}
		\n{ x_{k +  1} - x_k} \geq \n{y_j - x_k} \, .
	\end{equation}
	Indeed, we have
	\begin{equation*}
		\begin{aligned}
			L\n{ x_{k +  1} - x_k}^2 = \s{g}{x_{k +  1} - x_k} &= \s{g}{y_j - x_k} + \sum_{i=j}^{T-1} \alpha_i \s{g}{d_i} \\ &\geq \s{g}{y_j - x_k} \geq L\n{y_j - x_k}^2 \, ,
		\end{aligned}
	\end{equation*}
	where we used \eqref{case2c} in the first equality, \eqref{eq:rec} in the second, $\s{g}{d_j} \geq 0$ for every $j$ in the first inequality and $y_j \in \bar{B}$ in the second inequality. \\
	We also have 
	\begin{equation} \label{eq:fracineq}
		\begin{aligned}
			\frac{\s{g}{x_{k +  1} - x_k}}{\n{x_{k +  1} - x_k}}  = \frac{\s{g}{\sum_{j=0}^{T-1}\alpha_j d_j}}{\n{\sum_{j=0}^{T-1}\alpha_j d_j}} & \geq
			\frac{\s{g}{\sum_{j=0}^{T-1}\alpha_j d_j}}{\sum_{j=0}^{T-1}\alpha_j \n{d_j}} \\
			\geq \min \left\{\frac{\s{g}{d_j}}{\n{d_j}} \ \vert \ 0  \leq j \leq T-1 \right\} \, .
		\end{aligned}
	\end{equation}
	Thus for $\tilde{T} \in \argmin \left\{ \frac{\s{g}{d_j}}{\n{d_j}} \ \vert \ 0  \leq j \leq T-1 \right\}$ 
	\begin{equation} \label{tildeT}
		\s{g}{\hat{d}_{\tilde{T}}} \leq 	\frac{\s{g}{x_{k +  1} - x_k}}{\n{x_{k +  1} - x_k}} = L\n{x_{k +  1} - x_k} \, ,
	\end{equation}
	where we used \eqref{eq:fracineq} in the first inequality and \eqref{case2c} in the second. \\
	We finally have
	\begin{equation*}
		p_{\tilde{T}} \leq \tilde{p}_{\tilde{T}} + L\n{y_{\tilde{T}} - x_k} \leq \frac{1}{\tau}\s{g}{\hat{d}_{\tilde{T}}} + L\n{y_{ \tilde{T}} - x_k} \leq \left(\frac{L}{\tau} + L\right) \n{x_{k +  1} - x_k} \, ,
	\end{equation*}
	where we used \eqref{eq:claim2}, \eqref{tildeT} in the last inequality and the rest follows reasoning as for \eqref{t2T}. In particular $\tilde{x}_k = y_{\tilde{T}}$ satisfies the desired properties, where $\n{\tilde{x}_k - x_k} \leq \n{x_{k + 1} - x_k}$ by \eqref{eq:claim2}. 
\end{proof}

	\begin{proof}[Proof of Proposition \ref{p:SSCworstaverage}.]
	Let $T(k)$ be the number of iterates generated by the SSC at the step $k$ in Phase II.
	For the AFW and the PFW, reasoning as in the proof of Proposition \ref{p:SSCworst} we obtain that if the SSC does $T(k)$ iterations, the number of active vertices decreases by at least $T(k) - 2$. Then on the one hand
	\begin{equation} \label{eq:afwFxkbound}
		\vert S^{(k)} \vert - \vert S^{(0)} \vert \geq 1 - \vert S^{(0)} \vert \, ,
	\end{equation}
	while on the other hand
	\begin{equation} \label{eq:afwFxkbound2}
		\begin{aligned}
			& \vert S^{(k)} \vert - \vert S^{(0)} \vert = \sum_{i = 0}^{k - 1} (\vert S^{(i + 1)} \vert - \vert S^{(i)} \vert) \\ 
			& \leq 2k - \sum_{i = 0}^{k - 1} T(i) \, .	
		\end{aligned}
	\end{equation}
	Combining \eqref{eq:afwFxkbound} and \eqref{eq:afwFxkbound2} and rearranging, we obtain:
	\begin{equation}
		\frac{1}{k}\sum_{i = 0}^{k - 1} T(i) \leq 2 + \frac{\vert S^{(0)} \vert - 1}{k} \, ,
	\end{equation}
	and the desired result follows by taking the limit for $k \rightarrow \infty$.\\
	For the FDFW, notice that at every iteration the SSC performs a sequence of maximal in face steps terminated either by a Frank Wolfe step, after which $\F(y_j)$ can increase of at most $\Delta(\OM)$, or by a non maximal in face step, after which $\F(y_j)$ stays the same. In both cases, we have
	\begin{equation}
		\dim (\F(x_{ k + 1})) - \dim(\F(x_k)) \leq \Delta(\OM) - T(k) + 1.
	\end{equation}
	Then,
	\begin{equation} \label{eq:Fxkbound}
		\dim \F(x_{k}) - \dim \F(x_0) \geq - \dim \F(x_0) \, ,
	\end{equation}
	and
	\begin{equation} \label{eq:Fxkbound2}
		\begin{aligned}
			& \dim \F(x_k) - \dim \F(x_0) = \sum_{i = 0}^{k - 1} (\dim(\F(x_{i + 1}) - \dim(\F(x_i)))) \\ 
			& \leq k\Delta(\OM) + k - \sum_{i = 0}^{k - 1} T(i) \, .	
		\end{aligned}
	\end{equation}
	The conclusion follows as for the AFW and the PFW. 
\end{proof}

\begin{proof}[Theorem \ref{cor:stationary}.]
	The sequence $\{f(x_k)\}$ is decreasing by \eqref{dq}. Thus by compactness $f(x_k) \rightarrow \tilde{f} \in \RR$ and in particular $f(x_k) - f(x_{k + 1}) \rightarrow 0$. So that by \eqref{dq} also $\n{x_{k + 1} - x_k }\rightarrow 0$.  Let $\{x_{k(i)}\} \rightarrow \tilde{x}^*$ be any convergent subsequence of $\{x_k\}$. For $\{\tilde{x}_k\}$ chosen as in the proof of Proposition \ref{keyprop} we have $\n{\tilde{x}_k - x_k} \leq \n{x_{k+1} - x_k}$ because $\tilde{x}_k = y_T = x_k$ in case 1 and case 2, by \eqref{yt1} in case 3, and by \eqref{eq:claim2} in case 4. Therefore
	$$\n{\tilde{x}_{k(i)} - x_{k(i)}} \leq \n{x_{k(i)+ 1} - x_{k(i) }} \rightarrow 0 \, .$$ Furthermore,  $\n{\pi(T_{\OM}(\tilde{x}_{k(i)}), -\nabla f(\tilde{x}_{k(i)})))}~\leq~\frac{\n{x_{k(i)+ 1} - x_{k(i) }}}{K} \rightarrow 0 $ again by Proposition \ref{keyprop}, so that $\tilde{x}_{k(i)} \rightarrow \tilde{x}^*$ with $\n{\pi(T_{\OM}(\tilde{x}_{k(i)}), -\nabla f(\tilde{x}_{k(i)}))} \rightarrow 0$. Then $\n{\pi(T_{\OM}(\tilde{x}^*), -\nabla f(\tilde{x}^*))} =0 $ and $\tilde{x}^*$ is stationary. 
	
	The first inequality in \eqref{eq:qsqrk} follows directly from \eqref{dq2}. As for the second, we have 
	\begin{equation*} 
		\begin{aligned}
			& \frac{k + 1}{\K ^2} (\min_{0 \leq i \leq k} \n{x_{i + 1} - x_i})^2 = \frac{k + 1}{\K ^2} \min_{0 \leq i \leq k} \n{x_{i + 1} - x_i}^2  \\
			\leq &\frac{1}{\K ^2} \sum_{i= 0}^k \n{x_{i} - x_{i + 1}}^2 \leq \frac{2}{L\K ^2} \sum_{i = 0}^{k}(f(x_{i + 1}) - f(x_i)) \leq \frac{2(f(x_0) - \tilde{f})}{L\K ^2} \, ,
		\end{aligned}
	\end{equation*}
	where we used \eqref{dq} in the first inequality, $\{f(x_i)\}$ decreasing together with $f(x_i) \rightarrow \tilde{f}$  in the second and the thesis follows by rearranging terms. 
\end{proof}

We now prove Lemma \ref{p:crkl}. We start by recalling Karamata's inequality (\cite{kadelburg2005inequalities}, \cite{karamata1932inegalite}) for concave functions. Given $A, B \in \RR^N$ it is said that $A$ majorizes $B$, written $A \succ B$, if
\begin{equation*}
	\begin{aligned}
		\sum_{i= 1}^j A_i & \geq \sum_{i= 1}^j B_i \ \tx{for } j \in [1:N] \, , \\
		\sum_{i = 1}^N A_i & = \sum_{i= 1}^N B_i \, .
	\end{aligned}		
\end{equation*}
If $h$ is concave and $A \succ B$ by Karamata's inequality
\begin{equation*} 
	\sum_{i= 1}^N h(A_i) \leq \sum_{i = 1}^N h(B_i) \, .
\end{equation*}

\newcommand{\tf}{\tilde{f}}

In order to prove Lemma \ref{p:crkl} we first need the following technical Lemma. 
	\begin{lemma}\label{l:karamata}
		Let $\{\tf_i\}_{i \in [0:j]}$ be a sequence of nonnegative numbers such that $\tf_{i + 1} \leq q \tf_i$ for some $q < 1$. Then 
		\begin{equation}
			\sum_{i = 0}^{j - 1} \sqrt{\tf_i - \tf_{i + 1}} \leq  \frac{\sqrt{\tf_0(1 - q)}}{1 - \sqrt{q}} \, .
		\end{equation} 
	\end{lemma}
	\begin{proof}
		Let $\bar{j} = \max\{i \geq 0 \ \vert \ \tf_j \leq q^{i} \tf_0 \}$, so that by \eqref{eq:qdec} we have $\bar{j} \geq j$. Define $w^*, v \in \mathbb{R}^{\bar{j} + 1}_{\geq 0}$ by
		\begin{equation}
			\begin{aligned}
				v &= (\tf_0 - q \tf_0, ..., q^{\bar{j} - 1}\tf_0 - q^{\bar{j}} \tf_0, q^{\bar{j}} \tf_0 - \tf_{j}) \, , \\
				w^* &= (\tf_0 - \tf_1, ..., \tf_{j - 1} - \tf_{j}, 0, ..., 0) \, .
			\end{aligned}
		\end{equation}
		Then for $0 \leq l < \bar{j}$ we have
		\begin{equation} \label{eq:vk1}
			\sum_{i = 0}^l v_i = \tf_0 - q^{l + 1} \tf_0 \leq \tf_{0} - \tf_{\min(l+1, j)} = \sum_{i = 0}^l w^*_i \, ,
		\end{equation}
		where we used $q^{l + 1}\tf_0 \geq \tf_{l + 1} $ for $l \leq j - 1$ and $q^{l + 1}\tf_0 \geq \tf_{j}$ for $j \leq l < \bar{j}$ in the inequality. Furthermore, for $l = \bar{j}$ we have
	\begin{equation} \label{eq:vk3}
		\sum_{i= 0}^l v_i = \tf_{0} - \tf_{j} = \sum_{i=0}^{l} w^*_i \, .
	\end{equation}
		Now if $w$ is the permutation in descreasing order of $w^*$, clearly thanks to \eqref{eq:vk1}, and \eqref{eq:vk3} we have $w \succ v$.	Then 
		\begin{equation}
			\begin{aligned}
			\sum_{i = 0}^{j - 1} \sqrt{\tf_i - \tf_{i +1}} = & \sum_{i= 0}^{\bar{j} + 1} \sqrt{w^*_i} = \sum_{i= 0}^{\bar{j} + 1} \sqrt{w_i} \leq  \sum_{i= 0}^{\bar{j} + 1} \sqrt{v_i} \\ 
				\leq &  \sqrt{\tf_0} \sum_{i= 0}^{+ \infty}\sqrt{q^i - q^{i + 1}} = \frac{\sqrt{\tf_0(1 - q)}}{1 - \sqrt{q}}	\, ,
			\end{aligned}
		\end{equation}
		where the first inequality follows from Karamata's inequality.
\end{proof}
\begin{proof}[Proof of Lemma \ref{p:crkl}.]
	If the sequence $\{x_k\}$ is finite, with $x_m =\tilde{x}$ stationary for some $m \geq 0$, we define $x_k = x_{m}$ for every $k \geq m$, so that we can always assume $\{x_k\}$ infinite. Notice that with this convention the sufficient decrease condition \eqref{dq} is still satisfied for every $k$. Let $f_k = f(x_k) - f(x^*)$. $\{f_k\}$ is monotone decreasing by \eqref{dq}, and nonnegative since \eqref{hp:klP} holds for every $x_k$. \\ We want prove 
	$f_{k + 1} \leq q f_k$. This is clear if $f_{k + 1} = 0$. Otherwise using the notation of Proposition \ref{keyprop}  we have
	\begin{equation} \label{lindecr}
		f_k - f_{k+1} \geq \frac{L}{2}\n{x_k - x_{k+1}}^2 \geq \frac{LK^2}{2}\n{\pi(T_{\OM}(\tilde{x}_k), -\nabla f(\tilde{x}_k))} \, ,	
	\end{equation}
	where we used \eqref{dq} in the first inequality, \eqref{dq2} in the second. 
	Since $\tilde{x}_k \in \{y_j\}_{j = 0}^T$ by Proposition \ref{keyprop}, we can apply \eqref{hp:klP} in $\tilde{x}_k$ to obtain
	\begin{equation} \label{linedcr2}
		\frac{LK^2}{2}\n{\pi(T_{\OM}(\tilde{x}_k), -\nabla f(\tilde{x}_k))}^2	\geq \mu L K^2(f(\tilde{x}_k) - f(x^*)) \geq \mu L K^2f_{k+1}.
	\end{equation}
	Concatenating \eqref{lindecr}, \eqref{linedcr2} and rearranging we obtain
	\begin{equation} \label{eq:decrease}
		f_{k+1} \leq (1 + \mu LK^2)^{-1}f_k = q f_k \, .
	\end{equation}
	Thus by induction for any $i \geq 0$
	\begin{equation} \label{eq:qdec}
		f_{k + i} \leq q^i f_k \, ,
	\end{equation}
	which implies in particular \eqref{th:qdec}. \\ 
	We can now bound the length of the tails of $\{x_k\}$:
	\begin{equation} \label{eq:tailL}
		\sum_{i = 0}^{+\infty} \n{x_{k + i} - x_{k + i + 1}} \leq \sqrt{\frac{2}{L}} \sum_{i = 0}^{+ \infty} \sqrt{f_{k + i} - f_{k + i + 1}} \leq  \frac{\sqrt{2f_k(1 - q)}}{\sqrt{L}(1 - \sqrt{q})} \leq \frac{\sqrt{2f_0(1 - q)}}{\sqrt{L}(1 - \sqrt{q})} q^{\frac{k}{2}} \, ,
	\end{equation}
	where we used \eqref{dq} in the first inequality,  Lemma \ref{l:karamata} with $\{\tf_i\} = \{f_{k + i}\}$ and for $j \rightarrow +\infty$ in the second inequality, and \eqref{eq:qdec} in the third.
	In particular $x_k \rightarrow \tilde{x}^*$ with 
	\begin{equation}
		\n{x_k - \tilde{x}^*} \leq \sum_{j = 0}^{+\infty} \n{x_{k + j} - x_{k + j + 1}} = \frac{\sqrt{2f_0(1 - q)}}{\sqrt{L}(1 - \sqrt{q})} q^{\frac{k}{2}} 
	\end{equation}
	by \eqref{eq:tailL}. 
\end{proof}
\begin{proof}[Proof of Theorem \ref{t:loc}]
By continuity, for $\tilde{\delta} \rightarrow 0$ and $f_0 = f(x_0) - f(x^*)$ we have that 
\begin{equation}
    \max_{x_0 \in B_{\tdelta}(x^*) \cap [f \geq f(x^*)]} f_0 \rightarrow 0 \, ,
\end{equation}
so we can take $\tilde{\delta} < \delta/2$ small enough in such a way that
\begin{equation}\label{eq:tdeltamax}
    \max_{x_0 \in B_{\tdelta}(x^*) \cap [f \geq f(x^*)]}  \frac{\sqrt{2f_0(1 - q)}}{L(1 - \sqrt{q})} + \sqrt{\frac{2}{L}}\sqrt{f_0} < \frac{\delta}{2} \, .
\end{equation}
	Let now $x_0 \in B_{\tdelta}(x^*) \cap [f \geq f(x^*)]$, so that
	\begin{equation}\label{eq:tdelta}
		\tdelta < \frac{\delta}{2} < \delta - \frac{\sqrt{2f_0(1 - q)}}{L(1 - \sqrt{q})} - \sqrt{\frac{2}{L}}\sqrt{f_0} \, ,
	\end{equation}	
where we use \eqref{eq:tdeltamax} in the second inequality. 
		We now want to prove, by induction on $k$, $\{x_i\}_{i \in [0:k]} \subset B_{\delta}(x^*)$ with $f(x_{i + 1}) \leq qf(x_i)$ for every $i \in [0:k]$ and $k \in \mathbb{N}$. To start with, 
		\begin{equation} \label{eq:firsttail}
			\sum_{i = 0}^{k - 1} \n{x_i - x_{i + 1}} \leq \sqrt{\frac{2}{L}} \sum_{i = 0}^{k - 1} \sqrt{f_{i} - f_{i + 1}} \leq \frac{\sqrt{2f_0(1 - q)}}{\sqrt{L}(1 - \sqrt{q})}
		\end{equation}
		where we used \eqref{dq} in the first inequality, and Lemma \ref{l:karamata} (which we can apply thanks to the inductive assumption) in the second. But then
		\begin{equation}\label{eq:inball}
			\begin{aligned}
				& \n{x_{k + 1} - x^*} \leq \n{x_0 - x^*} + \left( \sum_{i = 0}^{k - 1} \n{x_i - x_{i + 1}} \right) + \n{x_k - x_{k + 1}} \\ 
				& \leq \tdelta + \frac{\sqrt{2f_0(1 - q)}}{L(1 - \sqrt{q})} + \sqrt{\frac{2}{L}}\sqrt{f_k - f_{k + 1}} \\
				& <	\tdelta + \frac{\sqrt{2f_0(1 - q)}}{L(1 - \sqrt{q})} + \sqrt{\frac{2}{L}}\sqrt{f_k} < \delta \, ,	
			\end{aligned}
		\end{equation}
		where we used \eqref{eq:firsttail} together with \eqref{dq} in the second inequality, the assumption $x_k \in B_{\delta}(x^*) \Rightarrow f_{k + 1} \geq 0$ in the third inequality, and \eqref{eq:tdelta} together with $f_0 \geq f_{k}$ in the last inequality. \\
		
		We now have
				\begin{equation}\label{eq:inball}
			\begin{aligned}
				& \n{\tilde{x}_{k} - x^*} \leq \n{x_0 - x^*} + \left( \sum_{i = 0}^{k - 1} \n{x_i - x_{i + 1}} \right) + \n{x_k - \tilde{x}_{k}} \\ 
				& \leq \n{x_0 - x^*} + \left( \sum_{i = 0}^{k - 1} \n{x_i - x_{i + 1}} \right) + \n{x_k - x_{k + 1}} < \delta \, ,	
			\end{aligned}
		\end{equation}
		where we use $\n{\tilde{x}_k - x_k} \leq \n{x_{k + 1} - x_k}$ in the second inequality and the last inequality follows as in \eqref{eq:inball}. Thus $\tilde{x}_k \in B_{\delta}(x^*)$ as well, which is enough to prove \eqref{eq:decrease} and complete the induction. We have thus obtained $\{\tilde{x}_k\}, \{ x_k \} \subset B_{\delta}(x^*)$, and the conclusion follows exactly as in the proof of Lemma \ref{p:crkl}.
\end{proof}

\begin{proof}[Proof of Corollary \ref{cor:glob}]
    Let $x^*$ be a limit point of $\{x_k\}$, and let $\tilde{\delta}$ be as in Theorem~\ref{t:loc}. First, for some $\bar{k} \in \mathbb{N}$ we must have $x_{\bar{k}} \in B_{\tilde{\delta}}(x^*)$. Furthermore, for every $k \in \mathbb{N}$ we have $f(x_k) \geq f(x^*)$ because $f(x_k)$ is non increasing and converges to $f(x^*)$. Thus we have all the necessary assumptions to obtain the asymptotic rates by applying Theorem \ref{t:loc} to $\{y_k\}=\{x_{\bar{k} + k}\}$.
\end{proof}
	\begin{lemma} \label{l:maxstepsize}
		Let $x$ be a proper convex combination of atoms in $A' \subset A$, and $d \neq 0$ feasible direction in $x$. Then, for some $y \in \conv(A')$, we have 
		\begin{equation}
			\hat{\alpha}^{\max}(y, d) \geq \frac{\PWidth(A)}{\n{d}} \, .
		\end{equation}
	\end{lemma}
	\begin{proof}
		Let $y\in \argmax_{z \in \conv(A')} \hat{\alpha}^{\max}(z, d)$, and let $A'' \subset A'$ be such that $y$ is a proper convex combination of elements in $A''$. Furthermore, let $\F_y$ be the minimal face containing the maximal feasible step point $\bar{y} := y + \hat{\alpha}^{\max}(y, d)$. We claim that $\F_y \cap A'' = \emptyset$. In fact, for $p \in A'' \cap \F_y$ we can consider an homothety of center $p$ and factor $1 + \epsilon$ mapping $y$ in $y_{\epsilon} \in \conv(A'')$ and $\bar{y}$ in $\bar{y}_{\epsilon} \in \F_y$ with 
		$$\bar{y}_{\epsilon} = y_{\epsilon} + (1 + \epsilon) \hat{\alpha}^{\max}(y, d) d \, .$$
		But then we would have $\hat{\alpha}(\bar{y}_{\epsilon}, d) \geq (1 + \epsilon) \hat{\alpha}(\bar{y}, d)$, in contradiction with the maximality of $\hat{\alpha}(\bar{y}, d)$. Therefore
		\begin{equation}
			\hat{\alpha}^{\max}(y, d) \geq \dist(A'', \F_y) \geq \min_{\F \in \tx{pfaces}(\OM)} \dist(\F, \conv(A\setminus \F)) = \PWidth(A) \, ,
		\end{equation}
	where we used $A'' \cap \F = \emptyset$ in the second inequality, and \cite[Theorem 2]{pena2018polytope} in the equality. 
	\end{proof}

\textbf{Data availability.} The data analysed during the current study are available in the 2nd DIMACS implementation challenge repository, 
\begin{verbatim}
  http://archive.dimacs.rutgers.edu/pub/challenge/graph/benchmarks/clique/  
\end{verbatim}

 \textbf{Conflict of interest.} The authors have no competing interests to declare that are relevant to the content of this article. 
\bibliographystyle{plain}
\bibliography{tesibib}

\end{document}